\newtheorem{thm}{Theorem}[section]
\newtheorem{prop}[thm]{Proposition}
\newtheorem{dfn}[thm]{Definition}
\newtheorem{rem}[thm]{Remark}
\numberwithin{equation}{section}
\begin{document}

\title[]{Artin's braids, Braids for three space, and groups $\Gamma_{n}^{4}$ and $G_{n}^{k}$}

\author{S.Kim}

\address{Department of Fundamental Sciences, Bauman Moscow State Technical University, Moscow, Russia \\
ksj19891120@gmail.com}

\author{V.O.Manturov}

\address{Chelyabinsk State University and Bauman Moscow State Technical University, Moscow, Russia \\
vomanturov@yandex.ru}
\subjclass{57M25, 57M27}%
\keywords{Pure braid group, Representation of pure braid groups, Configuration space, Triangulation of 3-dimensional spaces, Pachner move}

\begin{abstract}
We construct a group $\Gamma_{n}^{4}$ corresponding to the motion of points in $\mathbb{R}^{3}$ from the point of view of Delaunay triangulations. We study homomorphisms from pure braids on $n$ strands to the product of copies of $\Gamma_{n}^{4}$.  We will also study the group of pure braids in $\mathbb{R}^{3}$, which is described by a fundamental group of the restricted configuration space of $\mathbb{R}^{3}$, and define the group homomorphism from the group of pure braids in $\mathbb{R}^{3}$ to $\Gamma_{n}^{4}$. In the end of this paper we give some comments about relations between the restricted configuration space of $\mathbb{R}^{3}$ and triangulations of the 3-dimensional ball and Pachner moves.
\end{abstract}

\maketitle

\section{Introduction}
In~\cite{Manturov} the second named author defined a family of groups $G_{n}^{k}$ for two positive integers $n>k$, and formulated the following principle:

{\it If dynamical systems describing a motion of n particles, admit some good codimension one property governed by exactly $k$ particles, then these dynamical system have a topological invariant valued in $G_{n}^{k}$. }

The main examples coming from $G_{n}^{k}$-theory are homeomorphisms from the $n$-strand pure braid group to the groups $G_{n}^{3}$ and $G_{n}^{4}$ \cite{ManturovNikonov}. If we consider a motion of $n$ pairwise distinct points on the plane and choose the property ``some three points are collinear'', then we get a homomorphism from the pure $n$-strand braid group $PB_{n}$ to the group $G_{n}^{3}$. If we choose the property ``some four points belong to the same circle or line'', we shall get a group homomorphism from $PB_{n}$ to $G_{n}^{4}$. 

In other words, in our examples we look for ``walls'' in the configuration space $C_{n}(\mathbb{R}^{2})$ where some three points are collinear (or some four points are on the same circle or line). This condition can be well defined for $C_{n}(\mathbb{R}P^{2})$.

But what is the ``good'' codimension one property if we try to study similar configuration spaces or braids for some other topological spaces? First, our conditions will heavily depend on the metrics: the property ``three points are collinear'' is metrical. On the other hand, even having some good metrics chosen, we meet other obstacles because we need to know what is a ``line''. For example, there is no unique geodesics passing through two points in the general case. And when finding all possible geodesics and trying to write a word corresponding to it, we shall see that ``a word will contain infinitely many letters'' in the case of irrational cable.

The detour for this problem will be as follows: we shall consider the condition ``locally'' and instead of ``global configurations of spaces''. We shall consider only Vorono\"{i} tiling or Delaunay triangulations. From this point of view, we deal with the space of triangulations with a fixed number of triangles, where any two adjacent triangulations are related by a Pachner move \cite{Nabutovsky}, which is closely related to the group $\Gamma_{n}^{4}$ (see Definition~\ref{dfn_Gamma}). 

On the other hand, the triangulation of spaces and Pachner moves are also related to Yang-Baxter maps (see \cite{Dynnikov}). Moreover, in~\cite{ChoZichertYun, HikamiInoue} a boundary-parabolic $PSL(2,\mathbb{C})$-representation of $\pi_{1}(S^{3} \backslash K)$ for a hyperbolic knot $K$ is studied by using cluster algebras and {\it flips} -- Pachner moves for 2-dimensional triangulations. Since the group $\Gamma_{n}^{4}$ is closely related to triangulation of spaces and Pachner moves, it can be expected to obtain invariants by means of the group $\Gamma_{n}^{4}$ not only for braids, but also for knots, which are obtained by closing braids.

Now we consider restricted spaces $C_{n}'(\mathbb{R}^{k-1})$ defined as follows: a point in $C_{n}'(\mathbb{R}^{k-1})$ is a set of $n$ distinct points in $\mathbb{R}^{k-1}$, where every $(k-1)$ points are in general position. In particular, for $k=3$, the only condition is that no two points among the given $n$ points coincide and the fundamental group $\pi_{1}( C_{n}'(\mathbb{R}^{3-1}))$ is precisely the Artin pure braid group. For $k=4$, for points $x_{1},\cdots, x_{n}$ in three-space we require that no three points are collinear (though some four points can belong to the same plane). We call elements in $\pi_{1}( C_{n}'(\mathbb{R}^{4-1}))$ {\it braid on $n$ strand for $\mathbb{R}^{3}$}.
We call elements in $\pi_{1}( C_{n}'(\mathbb{R}P^{4-1}))$ {\it braid on $n$ strand for $\mathbb{R}P^{3}$}. In~\cite{Manturov_Gnk_config} the following statement is proved:

\begin{prop}
There exists the group homomorphism $f_{n}^{k}$ from $\pi_{1}( C_{n}'(\mathbb{R}P^{k-1}))$ to $G_{n}^{k}$.
\end{prop}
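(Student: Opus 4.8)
The plan is to construct $f_n^k$ on the level of loops in general position and then verify invariance under homotopy. First I would recall the defining data of $G_n^k$: it is generated by letters $a_m$ indexed by $(k)$-element subsets $m \subset \{1,\dots,n\}$, subject to $a_m^2 = 1$, a far-commutativity relation $a_m a_{m'} = a_{m'} a_m$ when $|m \cap m'| < k-1$, and a ``tetrahedron'' relation among the $a_m$ attached to the $(k+1)$-element subsets. A loop $\gamma$ in $C_n'(\mathbb{R}P^{k-1})$ based at a generic configuration is, after a small perturbation, an ambient path along which at finitely many moments exactly one ``wall'' is crossed transversally — a moment at which exactly $k$ of the $n$ points become projectively dependent (lie on a common $\mathbb{R}P^{k-2}$), no other such coincidence occurring simultaneously. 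Reading off the index set $m$ of those $k$ points at each wall-crossing, in temporal order, yields a word $w(\gamma)$ in the generators $a_m$, and I would define $f_n^k([\gamma])$ to be the image of $w(\gamma)$ in $G_n^k$.

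The main steps are then: (i) \emph{Genericity and transversality.} Show that every loop can be homotoped to one meeting the discriminant (the subset of $C_n(\mathbb{R}P^{k-1})$ where some $k$ points are dependent) only in its top stratum and only transversally; this is a standard jet-transversality / Thom argument using that the ``$k$ points dependent'' locus is a closed algebraic subset whose top stratum has codimension $1$ and whose deeper strata have codimension $\geq 2$. (ii) \emph{Well-definedness of the word up to the relations of $G_n^k$.} Homotopies of loops are governed by codimension-$2$ phenomena: two wall-crossings with index sets $m, m'$ with $|m\cap m'| \le k-2$ can be pushed past each other independently (giving far-commutativity — note $|m\cap m'|<k-1$ is exactly this), a wall can be crossed and immediately uncrossed (giving $a_m^2=1$), and a generic codimension-$2$ degeneration where $k+1$ points become dependent produces precisely the tetrahedron relation of $G_n^k$ among the $a_m$ for the $\binom{k+1}{k}$ sub-$k$-tuples. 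Checking this last local model is the technical heart. (iii) \emph{Homomorphism property.} Concatenation of loops concatenates words, which is compatible with the group multiplication in $G_n^k$; the constant loop gives the empty word.

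The hard part will be step (ii), specifically the analysis of the codimension-$2$ stratum where $k+1$ points are projectively dependent and the proof that a small loop around it reads exactly the defining tetrahedron relator of $G_n^k$ (with the correct cyclic ordering of the sub-$k$-tuples, which in $\mathbb{R}P^{k-1}$ as opposed to $S^{k-1}$ has to be handled carefully because the relevant link is a projective rather than spherical configuration). A secondary subtlety is ensuring that the perturbation in step (i) can be done relative to the basepoint and compatibly across a chosen homotopy, so that $f_n^k$ is genuinely defined on $\pi_1$ and not just on the set of generic loops; this is routine once the stratification of the discriminant is in hand, but must be stated. Everything else — far-commutativity from disjoint supports, cancellation of a doubled crossing — is a direct local picture and should be dispatched quickly.
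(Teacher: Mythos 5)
Your proposal is correct and follows essentially the same route as the paper (and the cited source): read off a generator $a_m$ at each transversal codimension-one wall-crossing where exactly $k$ points become dependent, and verify invariance by classifying the codimension-two strata — tangency giving $a_m^2=1$, two simultaneous walls with $|m\cap m'|<k-1$ giving far commutativity, and $k+1$ dependent points giving the $(k+1)$-subset relation. This is precisely the stratification argument the paper itself carries out for its analogous homomorphisms in Sections 3--6, so no further comparison is needed.
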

Roughly speaking, for a path $\gamma \in \pi_{1}( C_{n}'(\mathbb{R}P^{k-1}))$ the mapping $f_{n}^{k}(\gamma)$ is defined by writing $a_{m}$ when exactly one $k$-tuple of points belongs to a $(k-2)-$plane, where $m$ is the set of indices for $k$ points on the $(k-2)-$plane.

The paper is organized as follows. In Section 2, we introduce basic definitions and draw pictures describing the motion of points in $\mathbb{R}^{2}$ or in $\mathbb{R}^{3}$. In Section 3, we define the homomorphism from $PB_{n}$ to $\Gamma_{n}^{4}$. In Section 4 we shall construct a homomorphism $\psi_{n}$ from $PB_{n}$ to $\Gamma_{n}^{4} \times \Gamma_{n}^{4}$, which is defined as the homomorphism from $PB_{n}$ to $G_{n}^{4}$ in \cite{ManturovNikonov}, but separating four points on the circle with respect to the number of points inside the circle modulo $2$. In Section 5 we will construct a homomorphism $\pi_{n}$ from $G_{n}^{4}$ to $\Gamma_{n}^{4} \times \Gamma_{n}^{4}$ and show that the homomorphism $\psi_{n}$ can be presented by the composition of the homomorphism from $PB_{n}$ to $G_{n}^{4}$ in \cite{ManturovNikonov} and the homomorphism $\pi_{n}$.

\section{Pictures and basic definitions}

Let us use the notation $\bar{n} :=\{1, \cdots n\}$.
Following \cite{Lee}, we choose the presentation for the pure Artin braid group.
\begin{dfn}
The pure braid group $PB_{n}$ of $n$ strands is the group given by group presentation generated by $\{ b_{ij} | i,j \in \bar{n}, i<j\}$ subject to the following relations:
\begin{enumerate}
\item $b_{ij}b_{kl} = b_{kl}b_{ij}$ for $i,j,k,l \in \bar{n} $ such that $i<j<k<l$ or $i<k<l<j$; 
\item $b_{ij}b_{ik}b_{jk} = b_{ik}b_{jk}b_{ij} = b_{jk}b_{ij}b_{ik}$ for $i,j,k \in \bar{n} $ such that $i<j<k$;
\item  $b_{ik}b_{jk}b_{jl}b_{jk} = b_{jk}b_{jl}b_{jk}b_{ik} $ for $i,j,k,l \in \bar{n} $ such that $i<j<k<l$.
\end{enumerate}

\end{dfn}

\begin{dfn}
The group $G_{n}^{4}$ is the group given by group presentation generated by $\{ a_{\{ijkl\}}~|~ \{i,j,k,l\} \subset \bar{n}, |\{i,j,k\}| = 4\}$ subject to the following relations:

\begin{enumerate}
\item $a_{\{ijkl\}}^{2} = 1$ for $\{i,j,k,l\} \subset \bar{n}$, 
\item $a_{\{ijkl\}}a_{\{stuv\}} = a_{\{stuv\}}a_{\{ijkl\}}$ for $| \{i,j,k,l\} \cap \{s,t,u,v\} | < 3$,
\item $(a_{\{ijkl\}}a_{\{ijkm\}}a_{\{ijlm\}}a_{\{iklm\}}a_{\{jklm\}})^{2} = 1$ for distinct $i,j,k,l,m$.
\end{enumerate}
We use the notation $ a_{ijkl} := a_{\{ijkl\}}$.

\end{dfn}

Now we recall the group homomorphism from $PB_{n}$ to $G_{n}^{4}$, which is defined in \cite{ManturovNikonov}.
Pure braids can be considered as dynamical systems whose initial and final states coincide. 

\begin{figure}[h!]
 \centering
 \includegraphics[width = 6cm]{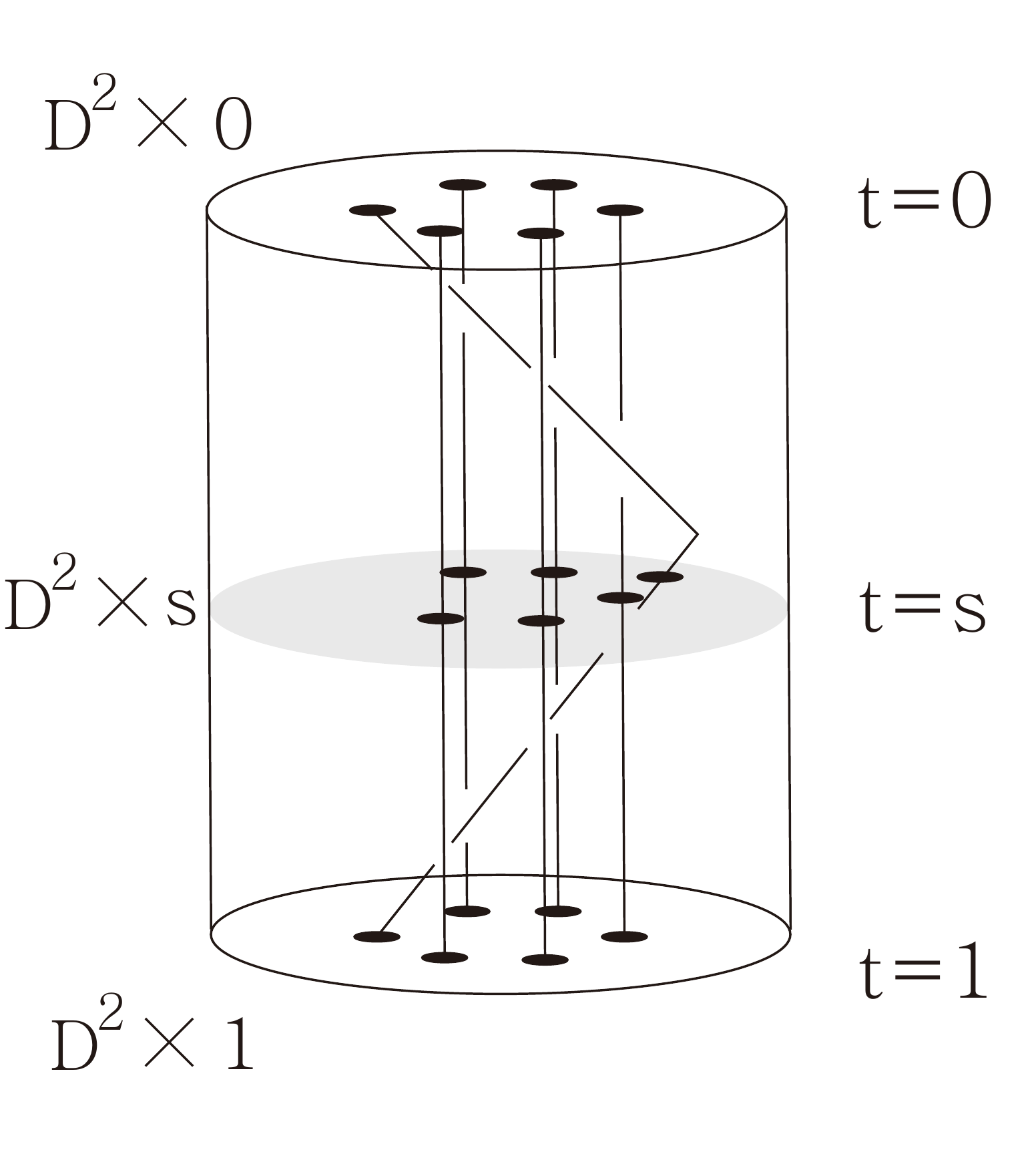}
 \caption{Dynamical system corresponding to $b_{ij}$}\label{exa-dyna}
\end{figure}

Let $\Gamma = \{(t, t^{2}) | t \in \mathbb{R} \} \subset \mathbb{R}^{2}$ be the graph of the function $y =x^{2}$. Consider a rapidly increasing sequence of positive numbers $t_{1}, t_{2}, \cdots, t_{n}$ and denote the point $(t_{i},t_{i}^{2}) \in \Gamma$ by $P_{i}$. 

We assume that the initial state is the configuration $\mathcal{P} = \{P_{1}, \cdots, P_{n}\}$ on the plane as described in Fig.~\ref{exa-dyna}. Notice that no four points of $\{P_{1}, \cdots, P_{n}\}$ are placed on the same circle, see~\cite{ManturovNikonov} for details.

\begin{figure}[h!]
 \centering
 \includegraphics[width = 6cm]{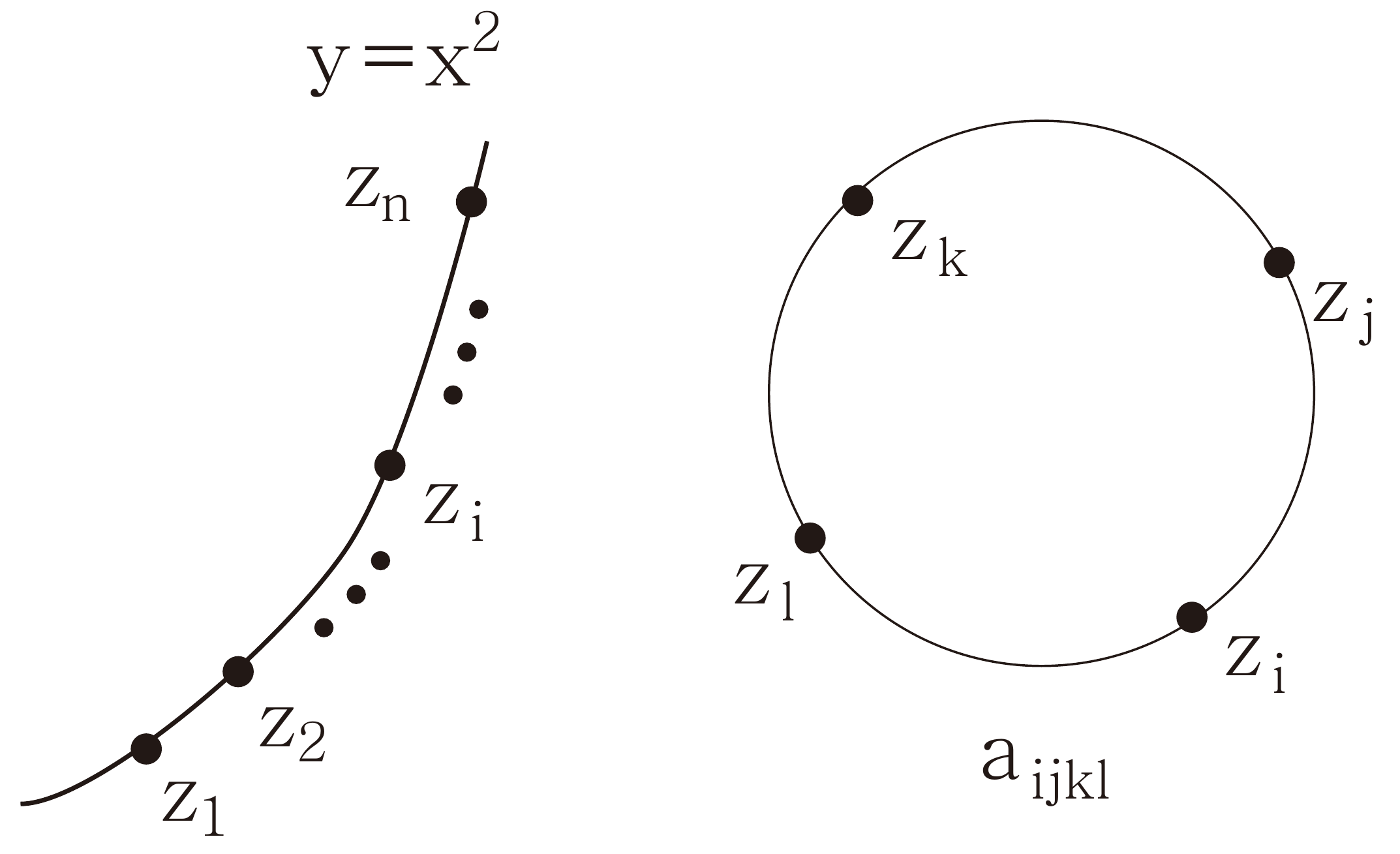}
 \caption{Initial state $\{P_{1}, P_{2}, \cdots, P_{n}\}$ such that $P_{i} = (t_{i},t_{i}^{2})$, where $\{t_{i}\}_{i=1}^{n}$ is a strictly increasing sequence}\label{exa-dyna}
\end{figure}

For any $i<j$ the pure braid $b_{ij}$ can be presented as the following dynamical system: the point $i$ moves along the graph $\Gamma$

\begin{enumerate}
\item the point $P_{i}(t)$ moves along the graphs $\Gamma$ and passes points 
$$ P_{i+1}(t), \cdots, P_{j}(t)$$from above, see the upper left of Fig.~\ref{moving_points__bij}.
\item the point $P_{j}(t)$ moves from above the point $P_{i}(t)$, see the upper right of Fig.~\ref{moving_points__bij}.
\item the point $P_{i}(t)$ moves to its initial position from above the points
$$ P_{j-1}(t), \cdots, P_{i+1}(t),$$
see the under of Fig.~\ref{moving_points__bij} 
\item the points $P_{j}(t)$ returns to the initial position.
\end{enumerate}

\begin{figure}[h!]
 \centering
 \includegraphics[width = 6cm]{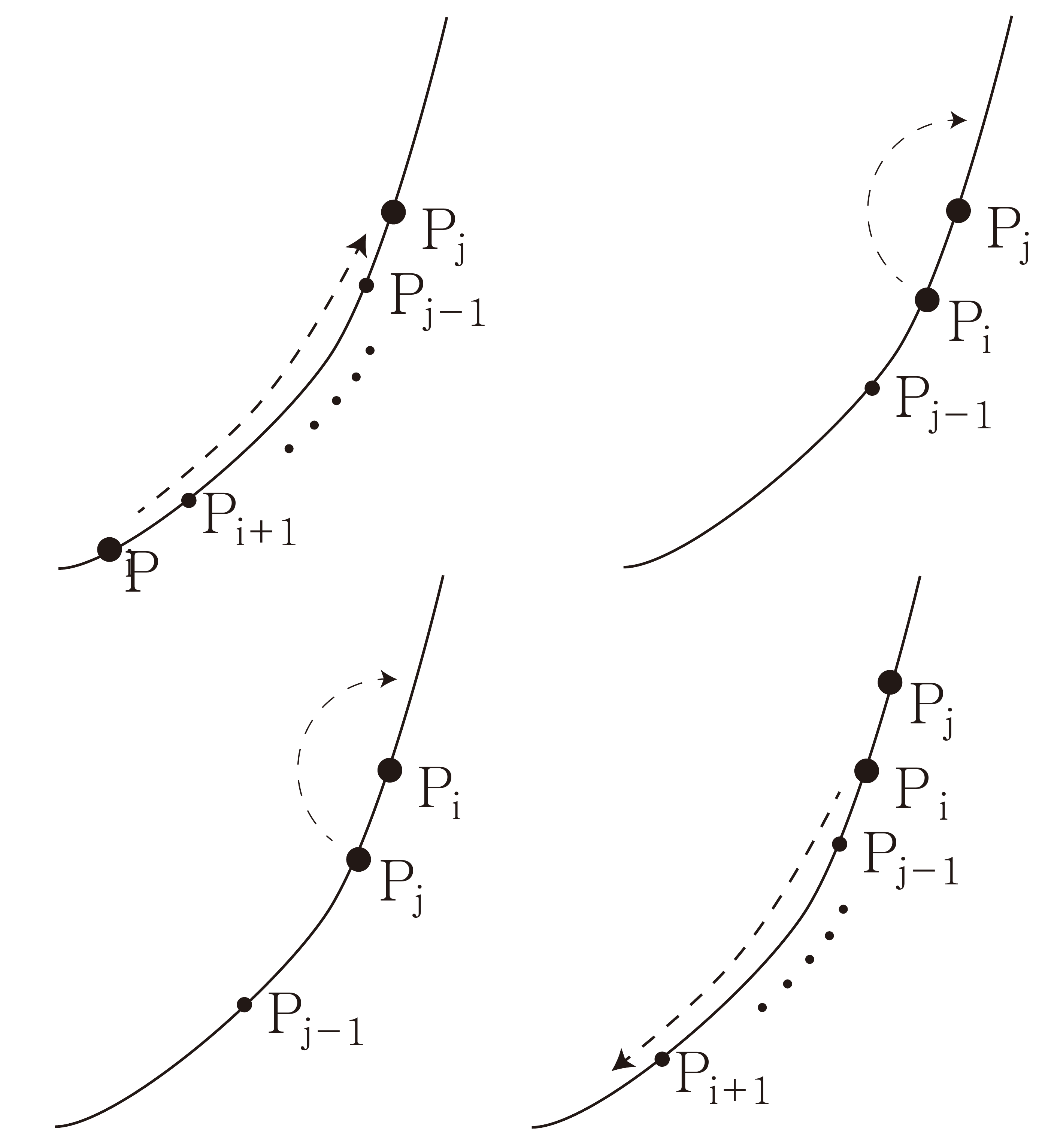}
 \caption{Model of moving points, corresponding to $b_{ij}$}\label{moving_points__bij}
\end{figure}

 Let $b_{ij} \in PB_{n}$, $1 \leq i<j \leq n$ be a generator. Consider the elements
\begin{eqnarray}
c_{ij}^{I} = \prod_{p=2}^{j-1}\prod_{q=1}^{p-1}a_{ijpq},\\
c_{ij}^{II} = \prod_{p=1}^{j-1}\prod_{q=1}^{n-j}a_{i(j-p)j(j+p)},\\
c_{ij}^{III} = \prod_{p=1}^{n-j+1}\prod_{q=0}^{n-p+1}a_{ij(n-p)(n-q)},
\end{eqnarray}
\begin{equation}\label{cij_homo_Gn4}
c_{ij}=c_{ij}^{II}c_{ij}^{I}c_{ij}^{III}.
\end{equation}

Notice that the formula~\ref{cij_homo_Gn4} is obtained by writing every moment, when four points are on the same circle, during the point $P_{i}$ passes over the point $P_{j}$ as described in Fig.~\ref{moving_points__cij}.
\begin{figure}[h!]
 \centering
 \includegraphics[width = 4cm]{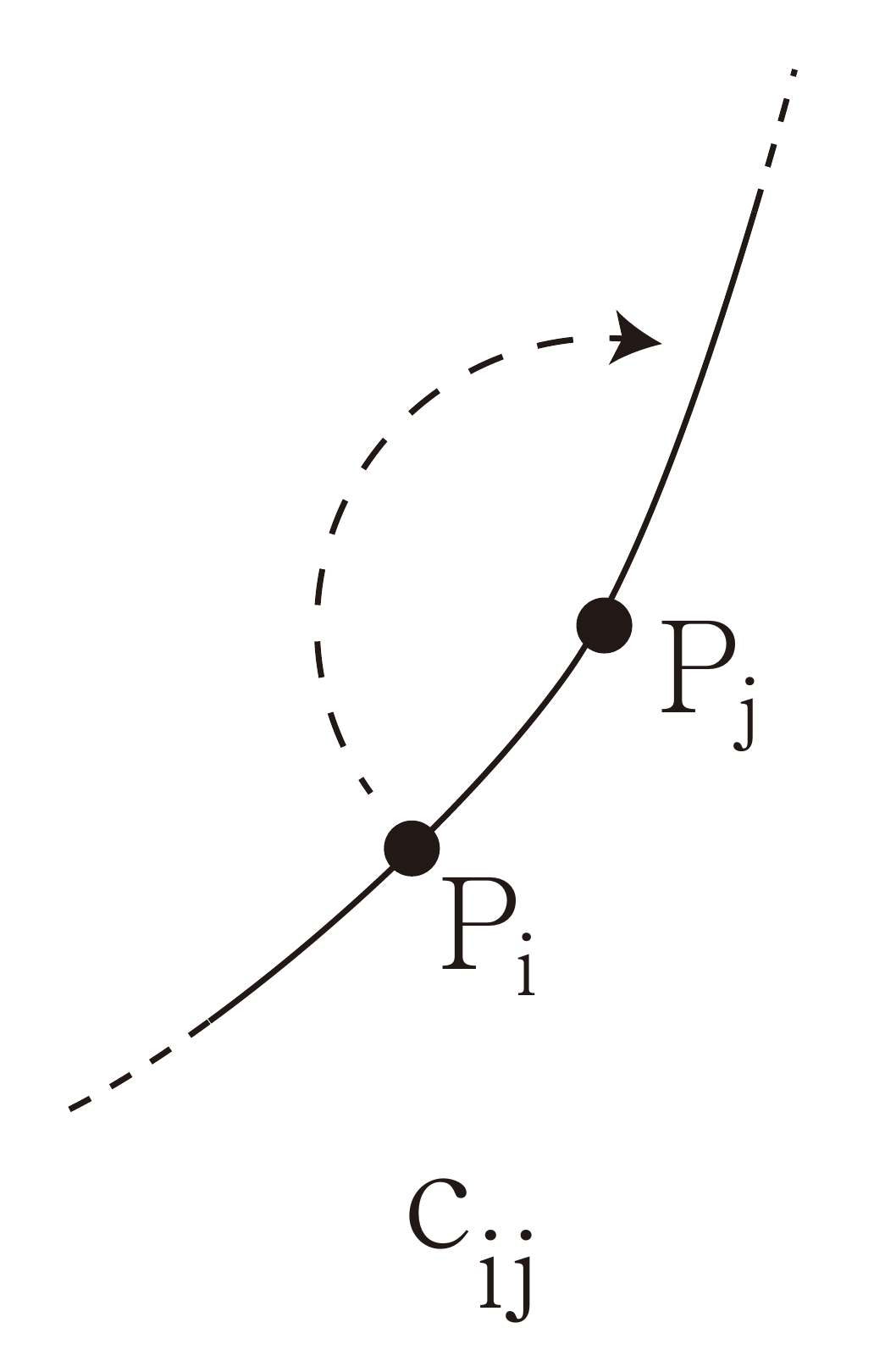}
 \caption{Model of moving points, corresponding to $c_{ij}$}\label{moving_points__cij}
\end{figure}

Now we define $\phi_{n} : PB_{n} \rightarrow G_{n}^{4}$ by
\begin{equation}\label{formula_homo_Gn4}
\phi_{n}(b_{ij}) = c_{i(i+1)}\cdots c_{i(j-1)}c_{ij}c_{ij}c_{i(j-1)}^{-1}\cdots c^{-1}_{i(i+1)},
\end{equation}
for $1 \leq i<j \leq n$.

\begin{prop}[\cite{ManturovNikonov}]
The mapping $\phi_{n}$ is well-defined.
\end{prop}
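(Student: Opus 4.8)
The plan is to verify that the assignment $b_{ij}\mapsto\phi_{n}(b_{ij})$ respects each of the three families of defining relations of $PB_{n}$ listed in the first definition, since a group presentation map is well-defined precisely when all relators map to the identity. Before doing that, I would set up the geometric dictionary carefully: the element $\phi_{n}(b_{ij})$ is read off from the motion of Fig.~\ref{moving_points__bij} as the concatenation of the ``circle events'' that occur while $P_{i}$ travels forward past $P_{i+1},\dots,P_{j}$ (this produces $c_{i(i+1)}\cdots c_{i(j-1)}c_{ij}$), the event when $P_{j}$ and $P_{i}$ swap roles (the middle $c_{ij}$), and the time-reversed return trip (producing $c_{ij}^{-1}c_{i(j-1)}^{-1}\cdots$, but note the published formula has $c_{ij}c_{ij}$ in the middle rather than a single one — so the first sanity check is to confirm that the two passes of $P_i$ over $P_j$ each contribute one $c_{ij}$, i.e. the forward run ends at $c_{ij}$, the backward run starts at $c_{ij}$, and these do not cancel because the combinatorial configurations differ). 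The key point making all of this literally a homomorphism is that $G_{n}^{4}$ is defined so that every generator is an involution and the only relations are the commutation relation (2) and the pentagon relation (3); so the verification reduces to checking that, along a homotopy between two dynamical systems representing the same pure braid, the sequence of circle events changes only by (a) cancelling an adjacent equal pair $a_{m}a_{m}$ (a pair of circle events that appear and disappear together — a ``birth/death'' of a tangency), (b) swapping two events whose index sets share at most two elements, or (c) replacing one side of a pentagon by the other (five points becoming concyclic in two different cyclic orders — the codimension-two degeneration where a fifth point crosses the circle through four others).

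Concretely, I would proceed relation by relation. For relation (1), $b_{ij}b_{kl}=b_{kl}b_{ij}$ with disjoint or nested index pairs: here the two dynamical systems can be performed in disjoint regions of the plane (after using the rapid-growth of the $t_i$ to localize the motions), so every circle event coming from the $b_{ij}$-motion involves indices from a bounded neighborhood of $\{i,j\}$ and every event from $b_{kl}$ from a neighborhood of $\{k,l\}$; I would check that the index sets of the two families of $a$'s overlap in at most two elements, so relation (2) of $G_n^4$ lets all of them commute past each other, giving equality of the two words. For relation (2) of $PB_n$, the triangle/Reidemeister-III-type relation $b_{ij}b_{ik}b_{jk}=b_{ik}b_{jk}b_{ij}=b_{jk}b_{ij}b_{ik}$, I would exhibit an explicit isotopy of the three-point sub-motion (the other $n-3$ points being passive spectators) through a generic one-parameter family; the single codimension-one degeneration encountered is exactly a five-point concyclicity involving $\{i,j,k\}$ together with two spectator points, which is resolved by applying relation (3) of $G_n^4$. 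This is the heart of the matter and the step I expect to be the main obstacle: one must track all the spectator indices correctly and confirm that every wall crossing in the isotopy is genuinely a pentagon move (not something worse), which in \cite{ManturovNikonov} is the delicate bookkeeping with the triple products $c_{i(i+1)}\cdots c_{ij}$. For relation (3) of $PB_n$, $b_{ik}b_{jk}b_{jl}b_{jk}=b_{jk}b_{jl}b_{jk}b_{ik}$, the same strategy applies with a four-point active set $\{i,j,k,l\}$: I would build the isotopy between the two dynamical systems and check that it crosses only commutation walls and pentagon walls.

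The one genuinely new element beyond citing \cite{ManturovNikonov} is to re-examine the explicit formulas (2.1)--(2.4) for $c_{ij}^{I},c_{ij}^{II},c_{ij}^{III}$ and confirm that $c_{ij}=c_{ij}^{II}c_{ij}^{I}c_{ij}^{III}$ is exactly the ordered list of concyclicity events when $P_i$ sweeps past $P_j$ in the parabola model of Fig.~\ref{moving_points__cij}; this is a finite check in the plane using that the points lie on $y=x^2$, so four of them are concyclic iff a certain $4\times 4$ determinant vanishes, and one can read off from the parabola exactly which quadruples become concyclic and in what temporal order as $P_i$ moves. Once that geometric identification is in place, well-definedness of $\phi_n$ follows from the structural fact that $G_n^4$ has precisely the relations (involution, far commutativity, pentagon) needed to absorb the three types of moves in any isotopy of dynamical systems, and hence the word $\phi_n$ attaches to a pure braid depends only on the pure braid class, not on the chosen representative dynamical system; applying this to the dynamical systems on the two sides of each $PB_n$-relation gives the three required identities in $G_n^4$.
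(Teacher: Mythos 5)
Your proposal is correct and follows essentially the same route as the paper: the paper disposes of this proposition in one line by invoking the ``basic principle'' of \cite{Manturov} (the word read off a generic dynamical system changes, under a homotopy, only at codimension-two walls of exactly three types --- tangency to a circle, two independent concyclic quadruples, and five concyclic points --- which are absorbed by relations (1), (2), (3) of $G_{n}^{4}$), and your relation-by-relation verification of the $PB_{n}$ presentation is precisely that wall-crossing analysis applied to the loops bounding each relator. Your added sanity checks (that the middle $c_{ij}c_{ij}$ records the two genuinely distinct passes and does not cancel, and that formulas (2.1)--(2.4) enumerate the concyclicity events on the parabola via the vanishing of the $4\times 4$ determinant) are consistent with the paper's geometric description in Fig.~\ref{moving_points__cij} and with the explicit singularity list given later in the proof of Theorem~\ref{thm_to_gamma}.
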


The above proposition can be proved by the basic principle, which is introduced in \cite{Manturov}.

\begin{dfn}\label{dfn_Gamma}
The group $\Gamma_{n}^{4}$ is the group given by group presentation generated by $\{ d_{(ijkl)}~|~ \{i,j,k,l\} \subset \bar{n}, |\{i,j,k\}| = 4\}$ subject to the following relations:

\begin{enumerate}
\item $d_{(ijkl)}^{2} = 1$ for $(i,j,k,l) \subset \bar{n}$, 
\item $d_{(ijkl)}d_{(stuv)} = d_{(stuv)}d_{(ijkl)}$, for $| \{i,j,k,l\} \cap \{s,t,u,v\} | < 3$,
\item $d_{(ijkl)}d_{(ijkm)}d_{(ijlm)}d_{(iklm)}d_{(jklm)} = 1$ for distinct $i,j,k,l,m$.
\item $d_{(ijkl)}=d_{(kjil)}=d_{(ilkj)}=d_{(klij)}=d_{(jkli)}=d_{(jilk)}=d_{(lkji)}=d_{(lijk)}$ for distinct $i,j,k,l,m$.
\end{enumerate}
\end{dfn}

The group $\Gamma_{n}^{4}$ is naturally related to  a triangulations of $2$-surfaces\footnote{The method presented here works for arbitrary $2$-surfaces as well but we restrict ourselves to the case of the plane} and the Pachner moves for the two dimensional case, called ``flip'', see Fig.~\ref{flip}. More precisely, a generator $d_{(ijkl)}$ of $\Gamma_{n}^{4}$ corresponds to the sequence of flips constituting the Pentagon relation, the most important relation for the group $\Gamma_{n}^{4}$.

\begin{figure}[h!]
 \centering
 \includegraphics[width = 8cm]{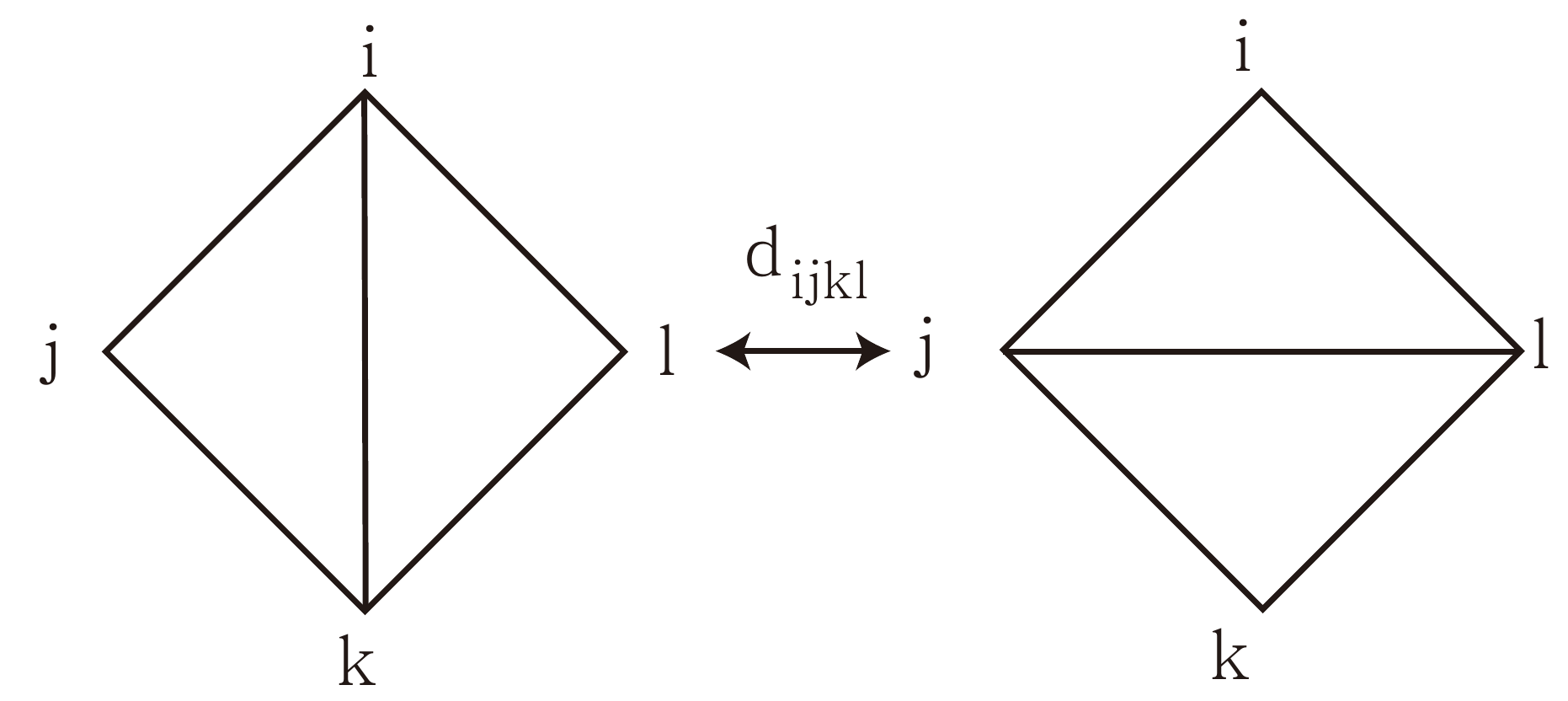}
 \caption{Flip on a rectangle $\Box ijkl$}\label{flip}
\end{figure}

Especially, the relation $d_{(ijkl)}d_{(ijkm)}d_{(ijlm)}d_{(iklm)}d_{(jklm)} = 1$ corresponds to the flips, applied on a pentagon as described in Fig.~\ref{flip_pentagon}.

\begin{figure}[h!]
 \centering
 \includegraphics[width = 10cm]{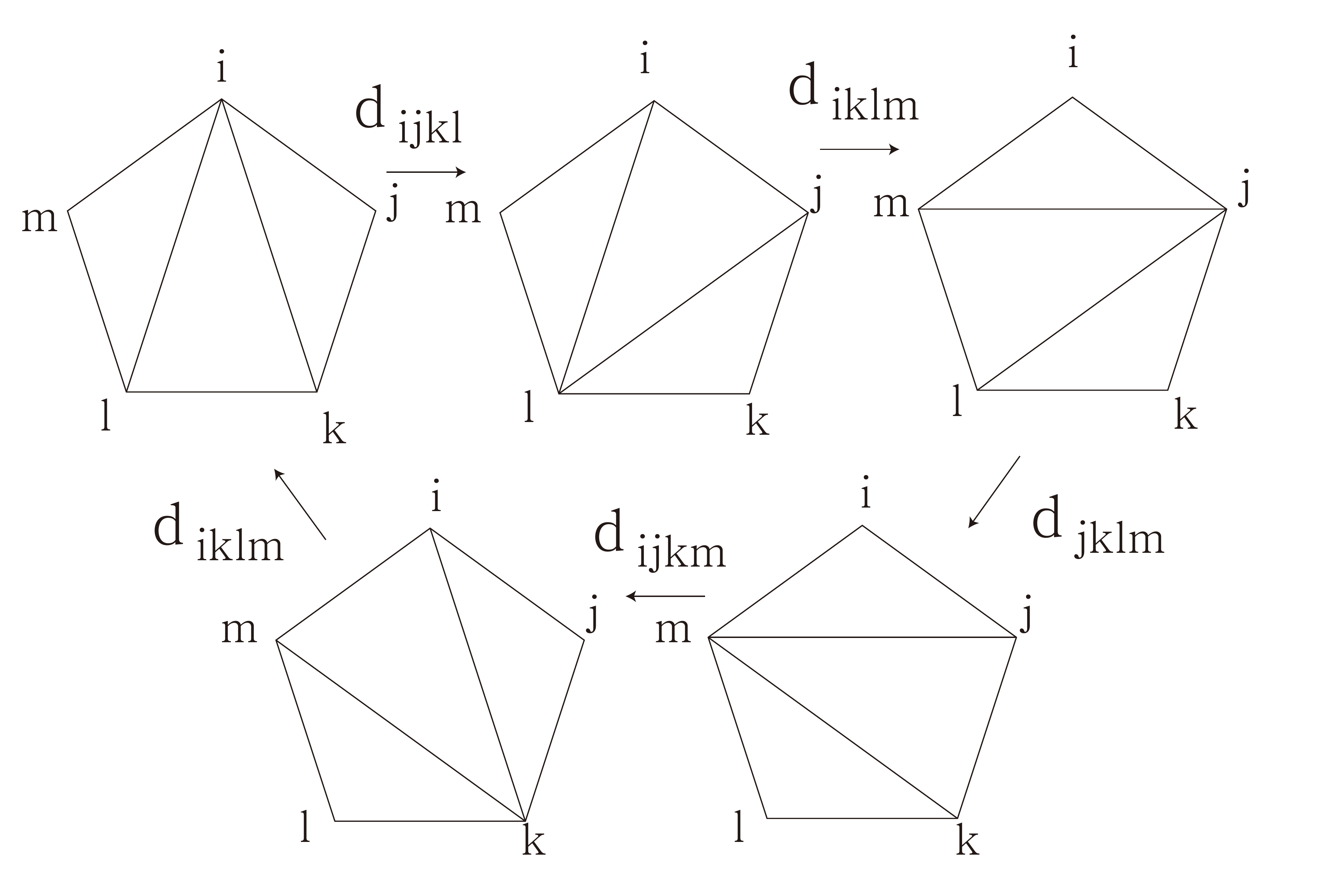}
 \caption{Flips on a pentagon $ijklm$}\label{flip_pentagon}
\end{figure}

\section{A group homomorphism from $PB_{n}$ to $\Gamma_{n}^{4}$}

In this section we construct the group homomorphism $f_{n}$ from $PB_{n}$ to $\Gamma_{n}^{4}$. The topological background for that is very easy: we consider codimension 1 ``walls'' which correspond to generators (flips) and codimension 2 relations (of the group $\Gamma$). Having this, we construct a map on the level of generators and prove its correctedness.

{\bf Geometric description of the mapping from $PB_{n}$ to $\Gamma_{n}^{4}$.}

Let us consider $b_{ij}$ as the dynamical system, described in Section 2.
The homomorphism $f_{n}$ from $PB_{n}$ to $\Gamma_{n}^{4}$ can be defined as follow; for the above dynamical system for each generator $b_{ij}$, let us enumerate $0<t_{1}<t_{2}< \cdots < t_{l} <1$ such that at the moment $t_{k}$ four points belong to the one circle. At the moment $t_{k}$, if $P_{s}, P_{t}, P_{u}, P_{v}$ are positioned on the one circle as the indicated order, then $d_{k} = d_{(stuv)}$. With the pure braid $b_{ij}$ we associate the product $f_{n}(b_{ij}) = d_{1}d_{2}\cdots d_{l}$.

{\bf Algebraic description of the mapping from $PB_{n}$ to $\Gamma_{n}^{4}$.}

On the other hand, the mapping $f_{n} : PB_{n} \rightarrow \Gamma_{n}^{4}$ can be formulated as follows:
Let us denote
\begin{center}
$d_{\{p,q, (r,s)_{s}\}}  = \left\{
\begin{array}{cc} 
     d_{(pqrs)} & \text{if}~p<q<s, \\
      d_{(prsq)} & \text{if}~p<s<q, \\ 
  d_{(rspq)} & \text{if}~s<p<q,\\
  d_{(qprs)} & \text{if}~q<p<s, \\
      d_{(qrsp)} & \text{if}~q<s<p, \\ 
  d_{(rsqp)} & \text{if}~s<q<p.
   \end{array}\right.$
   \end{center}
 \begin{rem}
Notice that the generator $d_{\{p,q, (r,s)_{s}\}}$ corresponds to four points $P_{p},P_{q},P_{r},P_{s}$ such that they are placed on a circle according to the order of $p,q,s$ and the point $P_{r}$ is placed close to $P_{s}$ for the orientation $P_{r}$ to $P_{s}$ to be the counterclockwise orientation, see Fig.~\ref{exa_pts_circle}. The subscription $s$ of $(r,s)_{s}$ means that the point $P_{s}$ does not move, but the point $P_{r}$ will move turning around the point $P_{s}$ after this moment. In other words, when we use the notation $d_{\{p,q, (r,s)_{s}\}}$, we are looking that the point $P_{r}$ is ``moving'' closely to the point $P_{s}$, turning around $P_{s}$. We would like to highlight that $d_{\{p,q, (r,s)_{s}\}} \neq d_{\{p,q, (s,r)_{s}\}}$.

\begin{figure}[h!]
 \centering
 \includegraphics[width = 8cm]{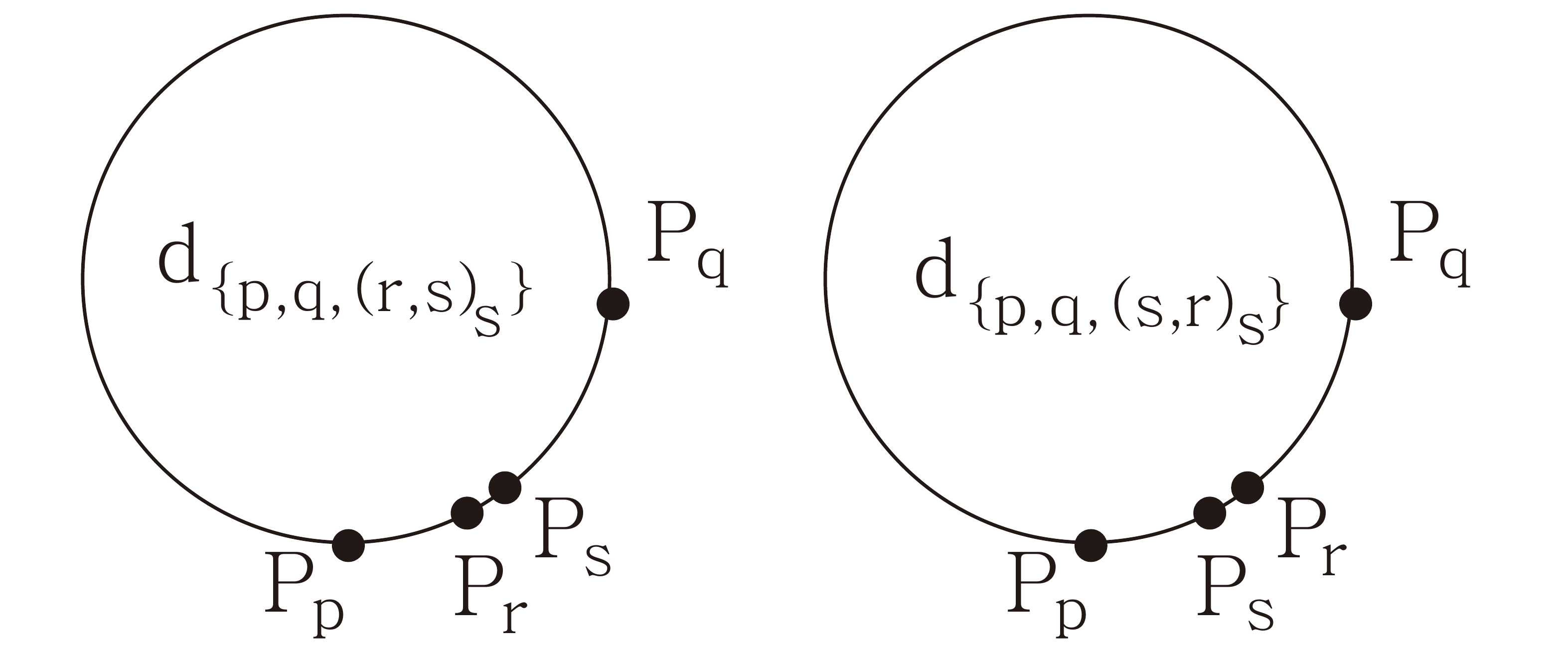}
 \caption{For $p<s<q$, $d_{\{p,q, (r,s)_{s}\}} = d_{(prsq)}$, but $d_{\{p,q, (s,r)_{s}\}} = d_{(psrq)}$.}\label{exa_pts_circle}
\end{figure}
\end{rem}

Let $b_{ij} \in PB_{n}$, $1 \leq i<j \leq n$, be a generator. Consider the elements
\begin{eqnarray*}
\gamma_{i,(i,j)}^{I} = \prod_{p=2}^{j-1}\prod_{q=1}^{p-1}d_{\{p,q,(i,j)_{j}\}},\\
\gamma_{i,(i,j)}^{II} = \prod_{p=1}^{j-1}\prod_{q=1}^{n-j}d_{\{(j-p),(j+p),(i,j)_{j}\}},\\
\gamma_{i,(i,j)}^{III} = \prod_{p=1}^{n-j+1}\prod_{q=0}^{n-p+1}d_{\{(n-p),(n-q),(i,j)_{j}\}},\\
\gamma_{i,(i,j)}=\gamma_{i,(i,j)}^{II}\gamma_{i,(i,j)}^{I}\gamma_{i,(i,j)}^{III}.
\end{eqnarray*}
Now we define $f_{n} : PB_{n} \rightarrow \Gamma_{n}^{4}$ by
$$f_{n}(b_{ij}) = \gamma_{i,(i,(i+1))}\cdots \gamma_{i,(i,(j-1))}\gamma_{i,(i,j)}\gamma_{i,(j,i)}\gamma_{i,((j-1),i)}^{-1}\cdots \gamma^{-1}_{i,((i+1),i)},$$
for $1 \leq i<j \leq n$.

\begin{thm}\label{thm_to_gamma}
The map $f_{n} : PB_{n} \rightarrow \Gamma_{n}^{4}$, which is defined above, is a well defined homomorphism. 
\end{thm}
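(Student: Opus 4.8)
The plan is to verify that $f_n$ respects each of the three defining relations of $PB_n$, following the ``basic principle'' of \cite{Manturov} that underlies the well-definedness of $\phi_n : PB_n \to G_n^4$ in \cite{ManturovNikonov}. The key observation is structural: the formulas defining $\gamma_{i,(i,j)}$ and $f_n(b_{ij})$ are obtained from those defining $c_{ij}$ and $\phi_n(b_{ij})$ by the substitution $a_{ijpq} \mapsto d_{\{p,q,(i,j)_j\}}$, which is a bookkeeping refinement of the generator $a_{ijkl}$ recording not only which four points $P_i,P_j,P_p,P_q$ are cocircular but also the cyclic order in which they sit on the circle and which point (here $P_j$) is the pivot. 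So I would first record a \emph{translation lemma}: the relations (1), (2), (4) of $\Gamma_n^4$ are satisfied automatically by the $d_{\{p,q,(r,s)_s\}}$ appearing in our words (involution, far commutativity, and the symmetry (4) which simply says that $d_{(ijkl)}$ is well defined independently of which vertex of the cyclically-ordered quadruple we call ``first''), and the only substantive relation is the pentagon (3). Then the whole proof reduces to checking that the two-dimensional analysis of \cite{ManturovNikonov} — where each codimension-$2$ degeneration (five points becoming cocircular, or two disjoint cocircularities, etc.) is matched against a relation of $G_n^4$ — lifts verbatim to $\Gamma_n^4$, now using the \emph{oriented/pivoted} pentagon relation $d_{(ijkl)}d_{(ijkm)}d_{(ijlm)}d_{(iklm)}d_{(jklm)}=1$ (an honest equality, not a square) in place of relation (3) of $G_n^4$.

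Concretely, I would proceed generator-relation by generator-relation. First, for a fixed $b_{ij}$ one must check that the word $f_n(b_{ij})$ is \emph{itself} well defined, i.e.\ independent of the small perturbations in the chosen dynamical system of Section 2: this is exactly the statement that going around a codimension-$2$ stratum of the configuration space (where, momentarily, five of the moving points lie on a common circle, or where two independent cocircularity events are exchanged in time) changes the recorded word by a consequence of relations (1)--(4) of $\Gamma_n^4$. The five-points-cocircular case is the pentagon; the two-independent-events case is far or near commutativity; the case where a point passes \emph{through} another requires relation (4) to identify the two labelings of the resulting quadruple. Second, the three braid relations (1)--(3) of $PB_n$ each correspond to a move of the whole dynamical picture through a higher-codimension stratum, and one checks that the two resulting words for $f_n$ of the two sides differ by relations of $\Gamma_n^4$; since $f_n$ is literally the $\Gamma_n^4$-refinement of $\phi_n$, each such verification is the image under the substitution $a \mapsto d$ of the corresponding verification already carried out for $\phi_n$, \emph{provided} one checks that every use of relation (3) of $G_n^4$ (a square of a product of five $a$'s) in \cite{ManturovNikonov} actually comes, at the geometric level, from an \emph{oriented} pentagon event and hence lifts to relation (3) of $\Gamma_n^4$ (a product of five $d$'s equal to $1$), and that relation (1) of $G_n^4$ ($a^2=1$) lifts to relation (1) of $\Gamma_n^4$.

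The main obstacle is precisely this last point, namely that $\Gamma_n^4$ is \emph{not} a quotient of $G_n^4$ in an obvious way — relation (3) of $\Gamma_n^4$ is a genuine equality $d_{(ijkl)}d_{(ijkm)}d_{(ijlm)}d_{(iklm)}d_{(jklm)}=1$ rather than the squared relation of $G_n^4$, so the lift is sensitive to orientations and to the cyclic ordering of the five points that become cocircular. One must show that when five of the moving points $P_{s_1},\dots,P_{s_5}$ pass through a common circle, they do so with a definite cyclic order that makes the five faces of the ``pentagon'' appear in exactly the order prescribed by relation (3) of $\Gamma_n^4$ (up to the symmetry (4)), rather than in some other cyclic or reflected order that would only give the weaker $G_n^4$ relation. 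This is a local planar-geometry computation at the degenerate configuration — essentially the statement that a circle through five points, perturbed, produces the Delaunay/anti-Delaunay flips of the convex pentagon in the pentagon-relation order — and it is the one genuinely new ingredient beyond transcribing \cite{ManturovNikonov}. I expect the remaining verifications (far commutativity from relation (2), the handling of near-critical configurations where the pivot point $P_j$ is involved, and the matching of the three $PB_n$ relations) to be routine once the oriented pentagon event is pinned down, and once the notational convention $d_{\{p,q,(r,s)_s\}}$ is checked to be consistent under the symmetry relation (4) of $\Gamma_n^4$.
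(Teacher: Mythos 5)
Your plan is essentially the same argument the paper gives: reduce well-definedness to the codimension-two strata of the configuration-space isotopy and match the three degenerations (tangency to a circle through three points, two independent cocircularity events, five points on one circle) to relations (1), (2), (3) of $\Gamma_{n}^{4}$ respectively. In fact the paper's proof is terser than your outline --- it merely lists these three cases and asserts the correspondence, without carrying out the oriented-pentagon ordering verification (that the five cocircularity events near a five-points-on-a-circle stratum occur in the cyclic order and with the orientations demanded by the unsquared relation (3)) which you correctly single out as the one genuinely delicate point.
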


\begin{proof}
When we consider isotopies between two pure braids, it suffices to take into account only singularities of codimension at most two. Singularities of codimension one give rise to generators, and relations come from singularities of codimension two. Now we list the cases of singularities of codimension two explicitly.

\begin{enumerate}
\item One point moving on the plane is tangent to the circle, which passes through three points, see Fig.~\ref{proof_rel1}. This corresponds to the relation $d_{(ijkl)}^{2} =1$.

\begin{figure}[h!]
 \centering
 \includegraphics[width = 8cm]{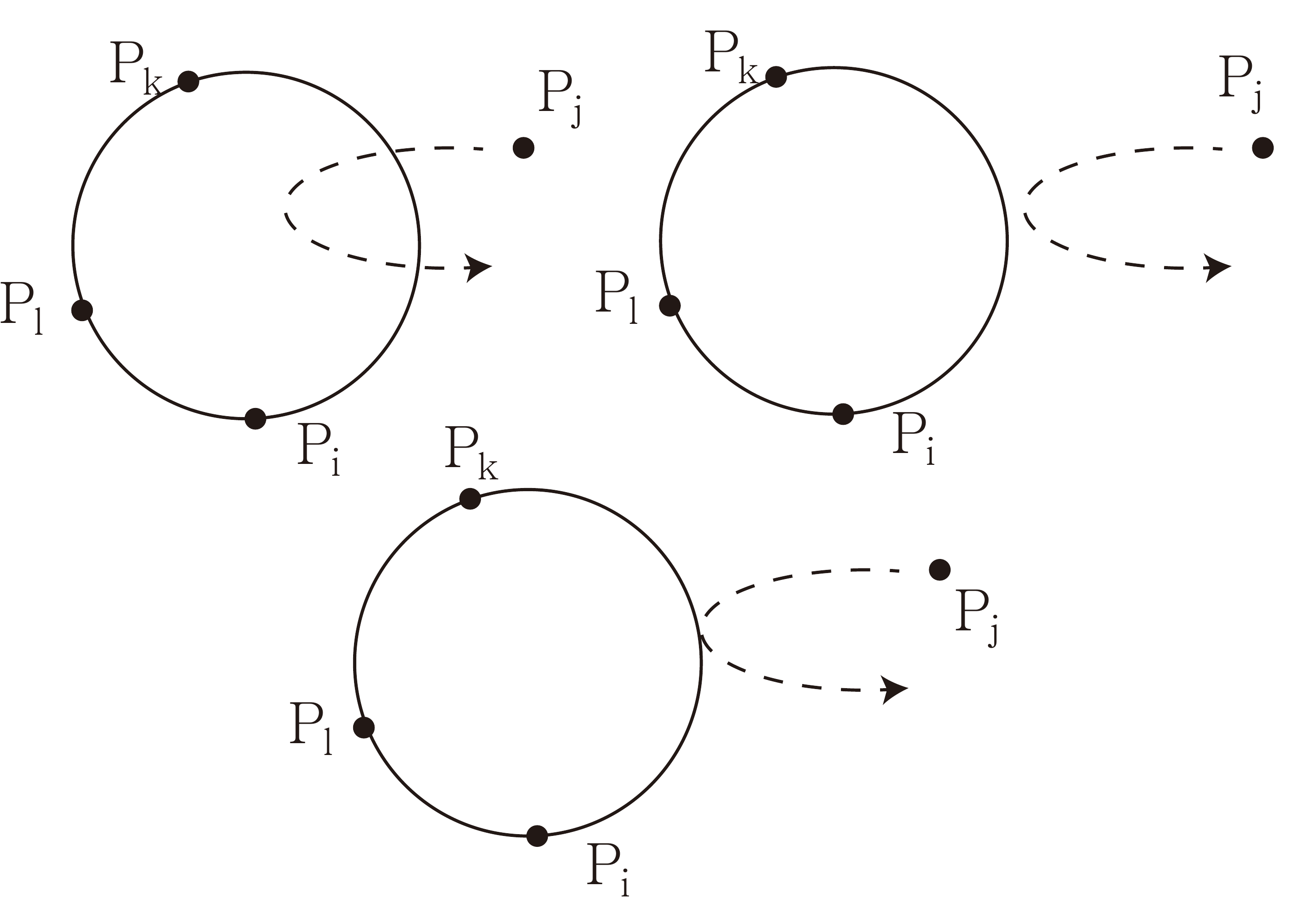}
 \caption{A point $P_{j}$ moves, being tangent to the circle, which passes through $P_{i},P_{k}$ and $P_{l}$}\label{proof_rel1}
\end{figure}

\item There are two sets $A$ and $B$ of four points, which are on the same circles such that $|A\cap B| \leq 2$, see Fig.~\ref{proof_rel2}. This corresponds to the relation $d_{(ijkl)}d_{(stuv)}=d_{(stuv)}d_{(ijkl)}$.

\begin{figure}[h!]
 \centering
 \includegraphics[width = 6cm]{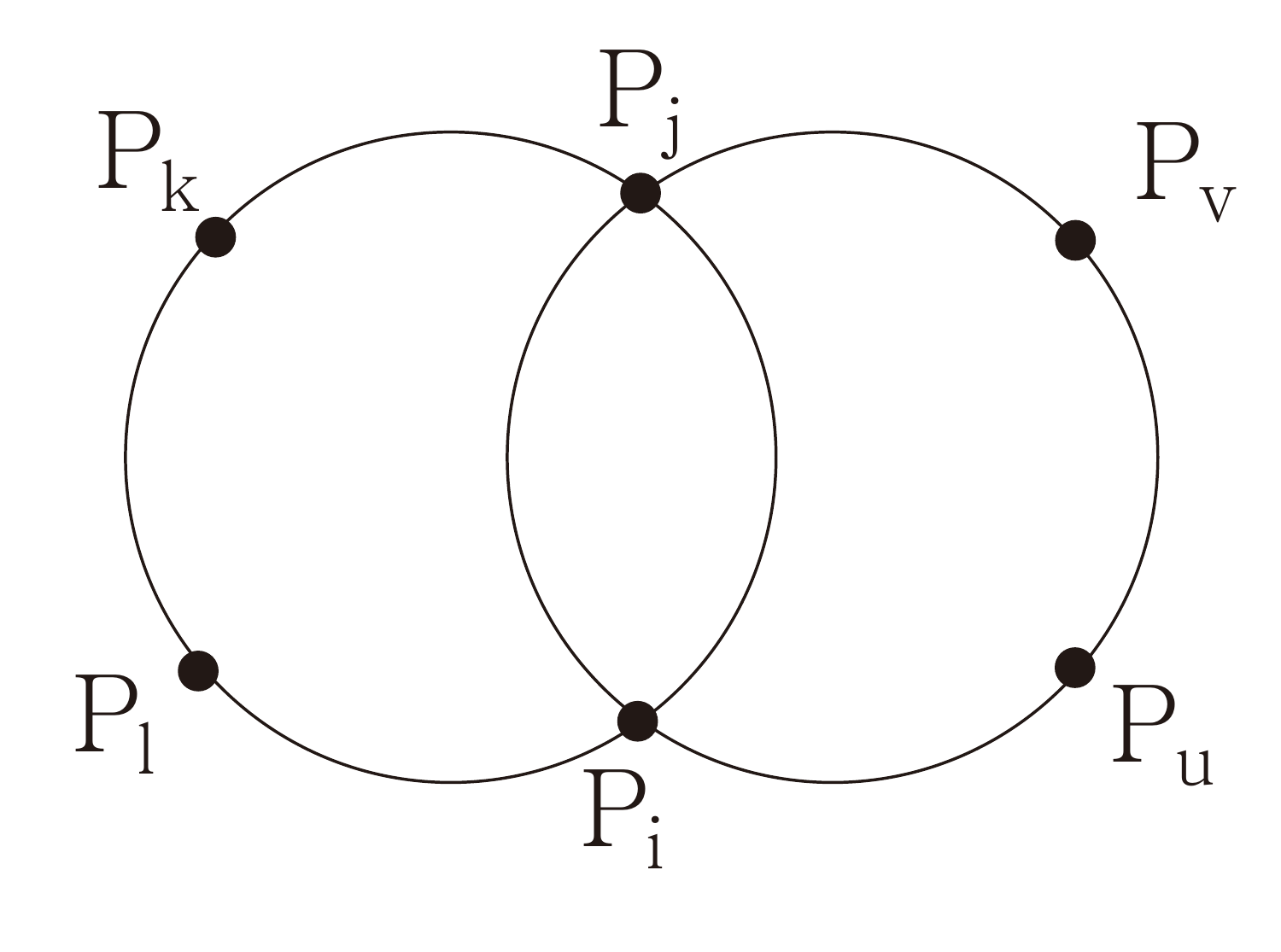}
 \caption{Two sets $A$ and $B$ of four points on the circles such that $|A\cap B| =2$}\label{proof_rel2}
\end{figure}

\item There are five points $\{P_{i},P_{j},P_{k},P_{l},P_{m}\}$ on the same circle. We obtain the sequence of five subsets of $\{P_{i},P_{j},P_{k},P_{l},P_{m}\}$ with four points on the same circle, which corresponds to the flips on the pentagon, see Fig.~\ref{proof_rel3}. This corresponds to the relation $d_{(ijkl)}d_{(ijkm)}d_{(ijlm)}d_{(iklm)}d_{(jklm)}=1$.

\begin{figure}[h!]
 \centering
 \includegraphics[width = 8cm]{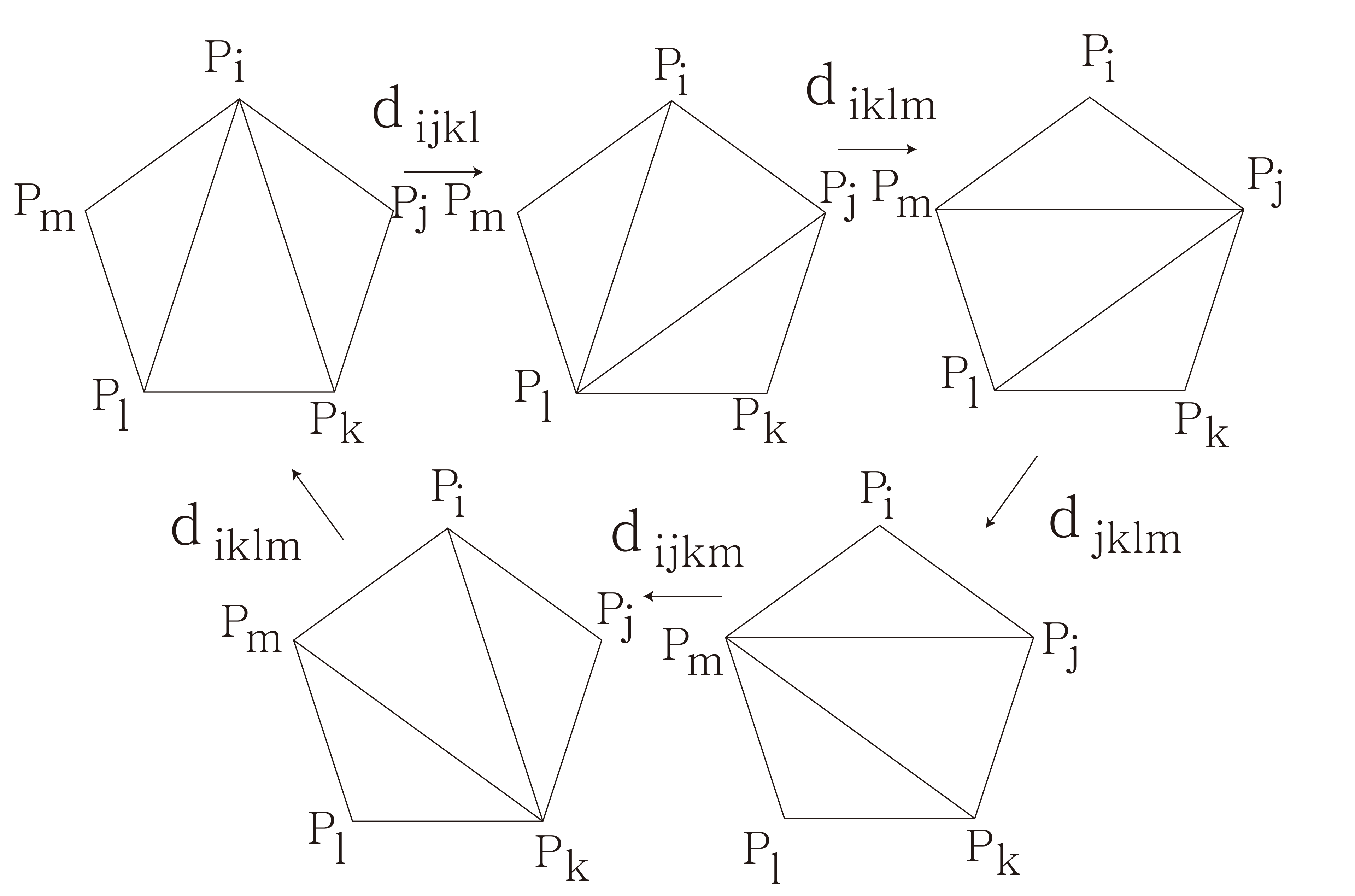}
 \caption{A sequence of five subsets of $\{P_{i},P_{j},P_{k},P_{l},P_{m}\}$, corresponding to flips on the pentagon}\label{proof_rel3}
\end{figure}
\end{enumerate}

\end{proof}

\section{A group homomorphism from $PB_{n}$ to $\Gamma_{n}^{4} \times \Gamma_{n}^{4}$ }
In this section we construct the group homomorphism $f^{2}_{n}$ from $PB_{n}$ to $\Gamma_{n}^{4} \times \Gamma_{n}^{4}$.
Roughly speaking, $f^{2}_{n}$ will be defined by reading generators of $\Gamma_{n}^{4} \times \Gamma_{n}^{4}$, which correspond to four points on the same circle, but we distinguish them with respect to the number of points inside the circle.

{\bf Geometric description of the mapping from $PB_{n}$ to $\Gamma_{n}^{4} \times \Gamma_{n}^{4}$.}

Let us consider the dynamical system for a generator $b_{ij}$, which is described in Section 2. Assume that an orientation on the plane is given. Let us enumerate $0<t_{1}<t_{2}< \cdots < t_{l} <1$ such that at the moment $t_{k}$ four points are positioned on the one circle (or on the line). Notice that by the assumption of the dynamical system for a generator $b_{ij}$, there are no four points on the circle at the initial. Let us assume that at the moment $t_{k}$, if $P_{s}, P_{t}, P_{u}, P_{v}$ are positioned on the one circle in the indicated order. If there are even number of points inside the circle, on which four points $P_{s}, P_{t}, P_{u}, P_{v}$ are placed, then $t_{k}$ corresponds to $\delta_{k} = (d_{(stuv)}, 1) \in \Gamma_{n}^{4} \times \Gamma_{n}^{4}$. Otherwise, $t_{k}$ corresponds to $\delta_{k} = ( 1,d_{(stuv)}) \in \Gamma_{n}^{4} \times \Gamma_{n}^{4}$. With the pure braid $b_{ij}$ we associate the product $f^{2}_{n}(b_{ij}) = \delta_{1}\delta_{2}\cdots \delta_{l}$.

\begin{figure}[h!]
 \centering
 \includegraphics[width = 8cm]{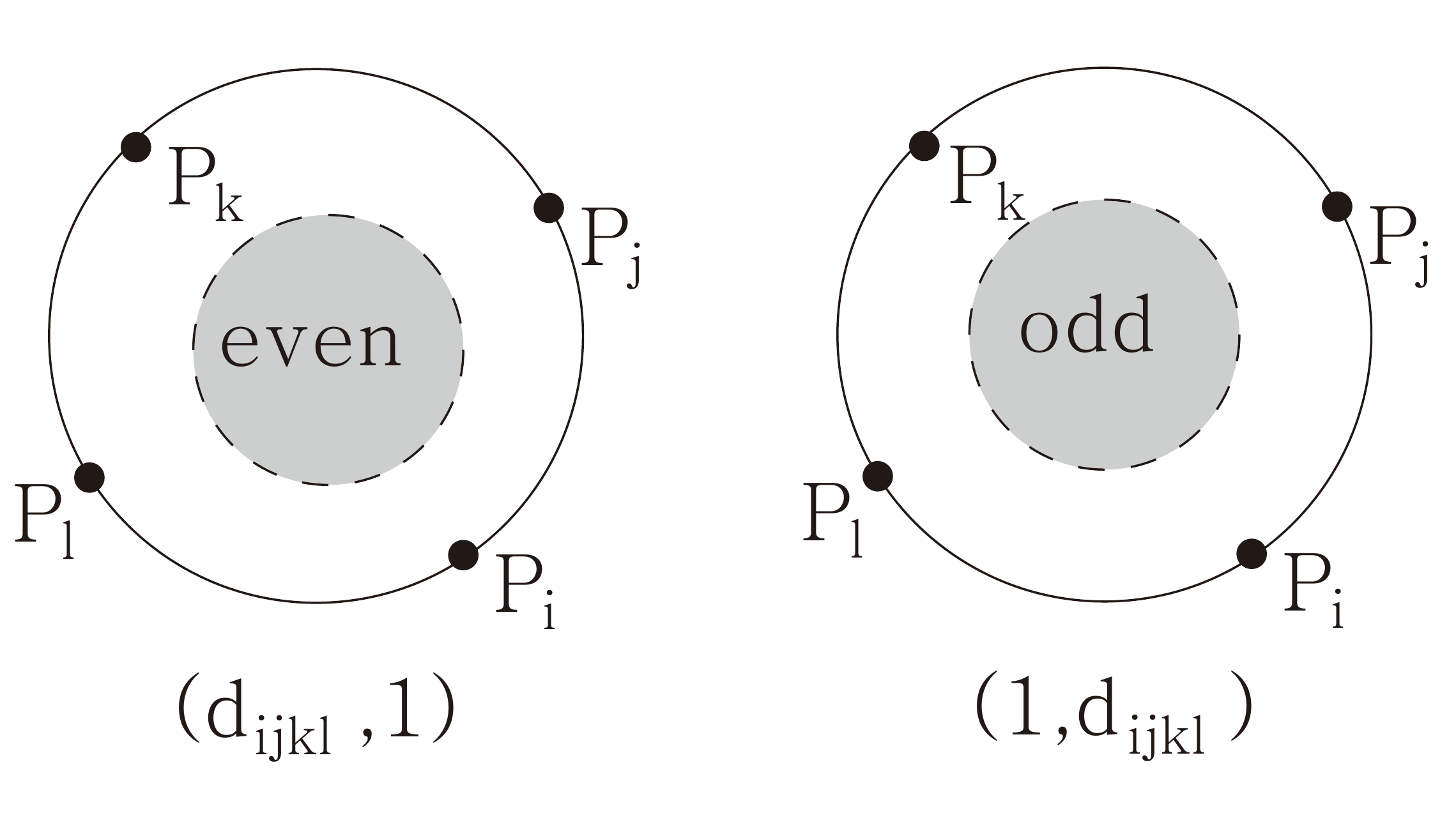}
 \caption{Geometric description for $f^{2}_{n}$}\label{geometric_description_psi}
\end{figure}

\begin{rem}
From the construction of base points $\{P_{1},\cdots, P_{n}\}$ it follows that the circle passing $P_{j},P_{p},P_{q}$ for $j<p<q$ contains points $\{P_{1}, \cdots, P_{j-1}\} \cup \{P_{p+1}, \cdots P_{q-1}\}$, see Fig.~\ref{circle}. That is, inside the circle passing $P_{j},P_{p},P_{q}$ for $j<p<q$ there are $j-1+q-p-1$ points from $\{P_{1},\cdots, P_{n}\}$.
\begin{figure}[h!]
 \centering
 \includegraphics[width = 8cm]{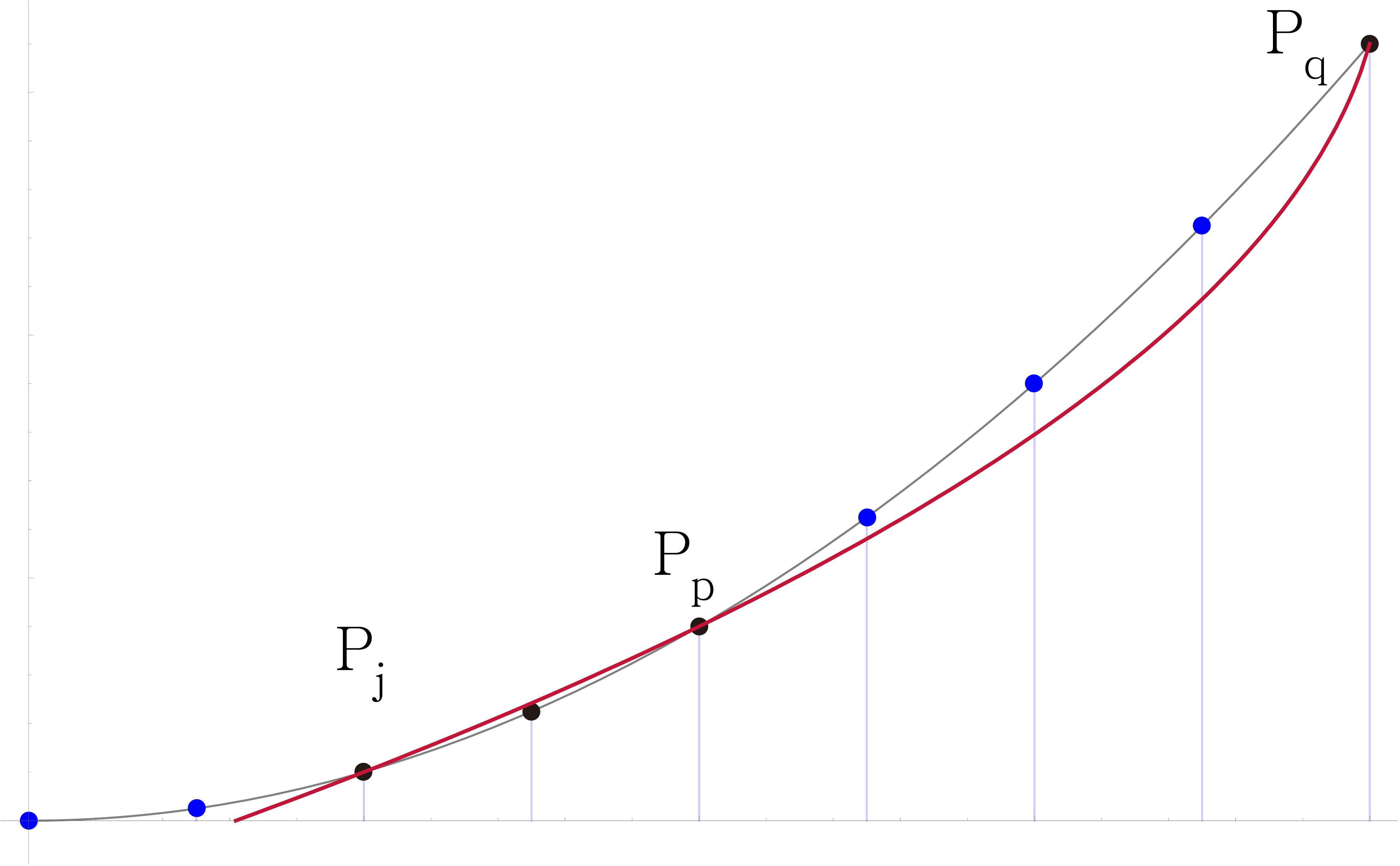}
 \caption{Red circle passes through points $P_{j},P_{p},P_{q}$ for $j<p<q$ and blue points are contained inside the red circle}\label{circle}
\end{figure}
\end{rem}

{\bf Algebraic description of the mapping from $PB_{n}$ to $\Gamma_{n}^{4} \times \Gamma_{n}^{4}$.}

On the other hands, the mapping $f^{2}_{n} : PB_{n} \rightarrow \Gamma_{n}^{4} \times \Gamma_{n}^{4}$ can be formulated as follow: Let us denote $mid\{p,q,r\}$ if $mid\{p,q,r\} \in \{p,q,r\}$ and $min\{p,q,r\}<mid\{p,q,r\}<max\{p,q,r\}$. Let us define $\delta_{\{p,q,(i,j)_{j}\}}$ as follows:

If $min\{p,q,j\}<i<mid\{p,q,r\}$ $i>max\{p,q,j\}$, then
\begin{center}
$\delta_{\{p,q,(i,j)_{j}\}}= \left\{
\begin{array}{cc} 
     (d_{\{p,q,(i,j)_{j}\}},1), & \text{if}~j+p+q\equiv 0~\text{mod}~2, \\
  (1,d_{\{p,q,(i,j)_{j}\}}),
 &   \text{if}~j+p+q \equiv 1~\text{mod}~2. \\
   \end{array}\right.$
\end{center}

If $i<min\{p,q,j\}$ or $mid\{p,q,j\}<i<max\{p,q,j\}$, then
\begin{center}
$\delta_{\{p,q,(i,j)_{j}\}}= \left\{
\begin{array}{cc} 
     (d_{\{p,q,(i,j)_{j}\}},1), & \text{if}~j+p+q\equiv 1~\text{mod}~2, \\
  (1,d_{\{p,q,(i,j)_{j}\}}),
 &   \text{if}~j+p+q \equiv 0~\text{mod}~2. \\
   \end{array}\right.$
\end{center}

 Let $b_{ij} \in PB_{n}$, $1 \leq i<j \leq n$ be a generator. Consider the elements

\begin{eqnarray}
D_{i,(i,j)}^{I} = \prod_{p=2}^{j-1}\prod_{q=1}^{p-1}\delta_{\{p,q,(i,j)_{j}\}},\\
D_{i,(i,j)}^{II} = \prod_{p=1}^{j-1}\prod_{q=1}^{n-j}\delta_{\{(j-p),(j+p),(i,j)_{j}\}},\\
D_{i,(i,j)}^{III} = \prod_{p=1}^{n-j+1}\prod_{q=0}^{n-p+1}\delta_{\{(n-p),(n-q),(i,j)_{j}\}},\\
D_{i,(i,j)}=D_{i,(i,j)}^{II}D_{i,(i,j)}^{I}D_{i,(i,j)}^{III}.
\end{eqnarray}

Now we define $f^{2}_{n} : PB_{n} \rightarrow \Gamma_{n}^{4} \times \Gamma_{n}^{4}$ by
$$f^{2}_{n}(b_{ij}) = D_{i,(i,(i+1))}\cdots D_{i,(i,(j-1))}D_{i,(i,j)}D_{i,(j,i)}D_{i,((j-1),i)}^{-1}\cdots D^{-1}_{i,((i+1),i)}$$
for $1 \leq i<j \leq n$.

\begin{thm}
The map $f^{2}_{n} : PB_{n} \rightarrow \Gamma_{n}^{4} \times \Gamma_{n}^{4}$, which is defined as above, is a well defined homomorphism. 
\end{thm}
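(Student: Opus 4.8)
The plan is to mimic the proof of Theorem~\ref{thm_to_gamma} exactly, treating $f^{2}_{n}$ as the refinement of $f_{n}$ obtained by recording, at each codimension-one wall, not only which four points $P_{s},P_{t},P_{u},P_{v}$ became concyclic but also the parity of the number of the remaining $n-4$ points lying inside that circle. Since the target group $\Gamma_{n}^{4}\times\Gamma_{n}^{4}$ is a direct product, a word in the $\delta$'s is a homomorphism from $PB_{n}$ precisely when each of its two coordinates is; and a priori one might worry that the two coordinates are not themselves of the form $f_{n}$ of anything. The right framework is the basic principle of~\cite{Manturov}: it suffices to check that for every codimension-two singularity occurring in a generic isotopy of pure braids, the corresponding local relation among the $\delta$-letters holds in $\Gamma_{n}^{4}\times\Gamma_{n}^{4}$, and that the explicit word $f^{2}_{n}(b_{ij})$ above is the one produced by the geometric (wall-crossing) description for the standard dynamical system realizing $b_{ij}$.

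First I would verify the consistency of the geometric and algebraic descriptions: running through the four-stage motion of $P_{i}$ over $P_{j}$ as in Fig.~\ref{moving_points__cij}, one lists the concyclicity events in order, and for each one reads off both the ordered $4$-tuple (this is already done in Theorem~\ref{thm_to_gamma}, giving the $d_{\{p,q,(i,j)_{j}\}}$) and the inside-count parity. Here the key input is the Remark: the circle through $P_{j},P_{p},P_{q}$ with $j<p<q$ contains exactly $j-1+q-p-1$ of the base points, so the parity of the inside-count is $j+p+q \bmod 2$ at the generic moment, and it flips by one each time the moving point $P_{i}$ crosses the arc — which is exactly the case distinction encoded in the definition of $\delta_{\{p,q,(i,j)_{j}\}}$ (whether $i$ lies in the "outside" interval, or between two of $p,q,j$). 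Checking that the stated piecewise formula for $\delta_{\{p,q,(i,j)_{j}\}}$ matches the actual position of $P_{i}$ relative to the arc $P_{p}P_{j}P_{q}$ at each of the three stages $D^{I},D^{II},D^{III}$ is the bookkeeping heart of this step.

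Next I would check the three codimension-two relations, reusing the pictures from the proof of Theorem~\ref{thm_to_gamma}. (1) When a moving point is tangent to a circle through three fixed points, the two wall-crossings that appear and disappear carry the \emph{same} ordered $4$-tuple \emph{and} the same inside-count parity (the inside region changes only by the tangency point itself, which is not counted, and the configuration is otherwise unchanged), so they cancel via $d_{(ijkl)}^{2}=1$ in whichever coordinate they land. (2) For two independent concyclic $4$-tuples $A,B$ with $|A\cap B|\le 2$, the two letters $\delta_A,\delta_B$ commute: if they land in different coordinates this is automatic, and if in the same coordinate it is relation~(2) of $\Gamma_{n}^{4}$ — one must note that passing one wall does not change the inside-count parity of the \emph{other}, since moving a single point across a circle not containing it (as $|A\cap B|\le 2<3$) leaves the other circle's interior census of that point fixed. (3) For five concyclic points, the pentagon of flips gives $d_{(ijkl)}d_{(ijkm)}d_{(ijlm)}d_{(iklm)}d_{(jklm)}$; here the crucial observation is that \emph{all five} of these sub-$4$-tuples have inside-counts of the same parity at the moment of five-fold concyclicity (each is obtained from the five-point circle by deleting one point from the circle, and a point on a circle is not "inside"), so the five letters all land in the same coordinate, and the relation~(3) of that copy of $\Gamma_{n}^{4}$ kills the product, the other coordinate being trivial. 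The symmetry relation~(4) of $\Gamma_{n}^{4}$ is used, as in the previous proof, to make the ordered-tuple notation well defined.

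The main obstacle I anticipate is relation~(3): one must be certain that the five relevant inside-counts really do all share one parity throughout the degeneration, i.e. that no fourth point sneaks across one of the five sub-circles during the infinitesimal pentagon move while staying outside the others. This is where the specific geometry of the base configuration $P_i=(t_i,t_i^2)$ on the parabola, together with the "rapidly increasing $t_i$" hypothesis, must be invoked to guarantee that the only degeneration happening at that instant is the five-fold concyclicity itself; granting that, all five sub-circles bound regions differing only by which of the five points is excluded, and since those five points all lie \emph{on} the limiting circle, none of them is counted inside, forcing equal parities. A secondary, purely combinatorial nuisance is confirming that the parity conventions in the two branches of the definition of $\delta_{\{p,q,(i,j)_{j}\}}$ are mutually consistent across the boundary cases where $i$ coincides in position with the transition points; this is routine once the arc-crossing picture is drawn, but it is the place where an off-by-one error would hide.
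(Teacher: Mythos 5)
Your overall strategy coincides with the paper's: the paper deduces this theorem from Theorem~\ref{main_theorem_rgamma} (the $r$-fold version), whose proof is exactly the three-case check of codimension-two singularities that you carry out for $r=2$; your cases (1) and (2) match that treatment, including the observation that letters in different coordinates commute automatically.

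There is, however, a genuine gap in your case (3), at exactly the point you flag as the main obstacle, and your proposed resolution does not close it. The assertion that the five sub-quadruples of five concyclic points all have inside-counts of equal parity is false. The letters are not read at the degenerate instant when all five points lie on one circle (where indeed none of them is ``inside''), but at the five nearby moments when exactly four of them are concyclic; at such a moment the omitted fifth point lies \emph{strictly} inside or \emph{strictly} outside the circumcircle of the other four, and which occurs changes from crossing to crossing. In the two-dimensional normal slice to the stratum ``five points concyclic,'' the five walls are five lines through the origin, and on the two rays of any one wall the omitted point lies on opposite sides of the corresponding circle, because ``the fifth point lies on the circle of the other four'' is precisely the codimension-two condition. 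A direct computation for five equally spaced points on a circle, perturbed radially, shows that along a single half-loop around the stratum the five crossings alternate between ``fifth point inside'' and ``fifth point outside.'' Consequently the five letters are distributed between the two coordinates of $\Gamma_{n}^{4}\times\Gamma_{n}^{4}$ rather than landing in a single factor, each factor receives only a sub-word of the pentagon word in a permuted order, and relation (3) of $\Gamma_{n}^{4}$ cannot be invoked. To be fair, the paper's own proof of Theorem~\ref{main_theorem_rgamma} makes the identical unjustified claim (``the number of points inside circles does not change''), so you have reproduced rather than introduced this gap; but to complete the argument one would have to either show that the relevant homotopies can avoid such degenerations, or enlarge the relation set of the target group, or compute the actual parity pattern and prove the resulting mixed word is trivial. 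As written, case (3) does not go through.
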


\begin{proof}
This statement follows from Theorem~\ref{main_theorem_rgamma}.
\end{proof}

\section{A group homomorphism from $PB_{n}$ to $\Gamma_{n}^{4} \times \Gamma_{n}^{4} \times \cdots \times \Gamma_{n}^{4}$ }

The homomorphism $f^{2}_{n} : PB_{n} \rightarrow \Gamma_{n}^{4} \times \Gamma_{n}^{4}$ can be extended to a mapping from $PB_{n}$ to the product of $\Gamma_{n}^{4}$ of $r-$copies for $r>2$ as follow. We denote the product of $r$ copies of the group $\Gamma_n^4$ for $r>2$ by
$$\Gamma_{n,1}^{4} \times \Gamma_{n,2}^{4} \times \cdots \times \Gamma_{n,r}^{4}.$$
The idea is that we can not only distinguish between ``evenly many points inside the circle'' or ``oddly many points inside the circle'', but also just count this number of points.
For the above dynamical system let us enumerate $0<t_{1}<t_{2}< \cdots < t_{l} <1$ such that at the moment $t_{k}$ four points $P_{s}, P_{t}, P_{u}, P_{v}$ are positioned on one circle. If the number of points inside the circle is $\alpha$ mod $r$, on which four points $P_{s}, P_{t}, P_{u}, P_{v}$ are placed, then $t_{k}$ corresponds to $\delta_{k} = (1,\cdots ,1,d_{stuv}, 1, \cdots, 1) \in  \{1\} \times \cdots \{1\} \times \Gamma_{n,\alpha}^{4} \times \{1\} \times \cdots \{1\} \subset   \Gamma_{n,1}^{4} \times \Gamma_{n,2}^{4} \times \cdots \times \Gamma_{n,r}^{4}$. With the pure braid $b_{ij}$ we associate the product $f^{r}_{n}(b_{ij}) = \delta_{1}\delta_{2}\cdots \delta_{l}$.

Algebraically this construction can be presented as follows:

\begin{eqnarray*}
D_{i,(i,j)}^{I} = \prod_{p=2}^{j-1}\prod_{q=1}^{p-1}\delta^{r}_{\{p,q,(i,j)_{j}\}},\\
D_{i,(i,j)}^{II} = \prod_{p=1}^{j-1}\prod_{q=1}^{n-j}\delta^{r}_{\{(j-p),(j+p),(i,j)_{j}\}},\\
D_{i,(i,j)}^{III} = \prod_{p=1}^{n-j+1}\prod_{q=0}^{n-p+1}\delta^{r}_{\{(n-p),(n-q),(i,j)_{j}\}},\\
\end{eqnarray*}
where 
\begin{eqnarray*}
\delta^{r}_{\{p,q,(i,j)_{j}\}}&= &
    (1, \cdots, d_{\{p,q,(i,j)_{j}\}},1, \cdots, 1) \in  \{1\} \times \cdots \{1\} \times \Gamma_{n,\alpha}^{4} \times \{1\} \times \cdots \{1\}  \\  && \subset \Gamma_{n,1}^{4} \times \Gamma_{n,2}^{4} \times \cdots \times \Gamma_{n,r}^{4},
    \end{eqnarray*}
if     
\begin{center}
$min\{j,p,q\}-mid\{j,p,q\}+max\{j,p,q\} -2 \equiv \left\{
\begin{array}{cc} 
    \alpha~mod~r, & \text{if}~min\{p,q,j\}<i<mid\{p,q,r\}~\text{or}~i>max\{p,q,j\}, \\
   \alpha-1~mod~r,
 &   \text{if}~i<min\{p,q,j\} ~\text{or}~ mid\{p,q,j\}<i<max\{p,q,j\}. \\
   \end{array}\right.$
\end{center}
     
$$D_{i,(i,j)}=D_{i,(i,j)}^{II}D_{i,(i,j)}^{I}D_{i,(i,j)}^{III}.$$
Now we define $f^{r}_{n} : PB_{n} \rightarrow \Gamma_{n,1}^{4} \times \Gamma_{n,2}^{4} \times \cdots \times \Gamma_{n,r}^{4}$ by
$$\psi_{n}(b_{ij}) = D_{i,(i,(i+1)}\cdots D_{i,(i,(j-1))}D_{i,(i,j)}D_{i,(i,j)}D_{(i,(i,(j-1))}^{-1}\cdots D^{-1}_{i,(i,(i+1)},$$
for $1 \leq i<j \leq n$.

\begin{thm}\label{main_theorem_rgamma}
The map $f^{r}_{n} : PB_{n} \rightarrow \Gamma_{n,1}^{4} \times \Gamma_{n,2}^{4} \times \cdots \times \Gamma_{n,r}^{4}$, which is defined as above, is a well defined homomorphism. 
\end{thm}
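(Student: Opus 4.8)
The plan is to reduce the claim to Theorem~\ref{thm_to_gamma} by showing that the map $f^{r}_{n}$ is obtained from $f_{n}$ by a purely bookkeeping refinement that never affects the validity of the relations of $\Gamma_{n}^{4}$. First I would record the structural observation: the generators of $\Gamma_{n,1}^{4} \times \cdots \times \Gamma_{n,r}^{4}$ living in distinct factors commute, and within each factor $\Gamma_{n,\alpha}^{4}$ the relations are exactly those of $\Gamma_{n}^{4}$. Consequently, it suffices to check that the same list of codimension-one and codimension-two singularities used in the proof of Theorem~\ref{thm_to_gamma} still yields well-defined output after we attach to each wall-crossing the extra label $\alpha \bmod r$ equal to ``the number of points strictly inside the circle through the four relevant points''. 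The geometric content is that this label is locally constant on each wall (it is determined by a combinatorial count that changes only when a fifth point crosses the circle, which is a higher-codimension event), so the generator associated to a codimension-one wall is well-defined; this is the first step to spell out.

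Next I would go through the three codimension-two cases exactly as in Theorem~\ref{thm_to_gamma}, verifying that the $\alpha$-labels are consistent with each relation. For case~(1), the tangency move: both occurrences of $d_{(ijkl)}$ in $d_{(ijkl)}^{2}=1$ arise from the same four points crossing the same circle in opposite directions, so they carry the same inside-count $\alpha$, hence both land in $\Gamma_{n,\alpha}^{4}$ and the relation $d_{(ijkl)}^{2}=1$ holds there. For case~(2), the two disjoint (or nearly disjoint) circle events: the two generators $d_{(ijkl)}$ and $d_{(stuv)}$ commute in $G_{n}^{4}/\Gamma_{n}^{4}$ already, and after labelling they either sit in different factors (automatically commuting) or in the same factor where the original commutation relation applies; one must note that the inside-count of each four-tuple is unaffected by the other four-tuple's circle event since $|A\cap B|\le 2$ and the crossing of the other configuration does not move any point across the first circle. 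For case~(3), the pentagon/Pachner relation: the five four-element subsets of $\{P_i,P_j,P_k,P_l,P_m\}$ all lie on a common circle at the critical moment, and the inside-count (number of the remaining $n-5$ points inside that circle) is the same for all five subsets, so all five generators $d_{(ijkl)},\dots,d_{(jklm)}$ receive the same label $\alpha$, land in $\Gamma_{n,\alpha}^{4}$, and the pentagon relation holds there. The only subtlety, which I would state carefully, is the $\pm 1$ correction in the definition of $\alpha$ accounting for whether the moving point $P_i$ is being counted as inside or outside; I would check that this correction is applied uniformly across the five subsets in case~(3) and across the two copies in case~(1), so it does not break consistency.

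Finally I would verify the algebraic side: that the explicit words $D_{i,(i,j)}^{I}, D_{i,(i,j)}^{II}, D_{i,(i,j)}^{III}$ and the assembled $\psi_{n}(b_{ij})$ do compute the product $\delta_{1}\cdots\delta_{l}$ of wall-crossings for the stated dynamical system — this is the same combinatorial count of circle events that appears in the proof in \cite{ManturovNikonov} for $\phi_n$ and in Theorem~\ref{thm_to_gamma} for $f_n$, now with the inside-count read off from the Remark describing which points lie inside the circle through $P_j,P_p,P_q$ for $j<p<q$ (namely $j-1+q-p-1$ of them, whence the formula $min-mid+max-2$). I expect the main obstacle to be precisely this last matching: confirming that the index ranges in the products for $D^{I},D^{II},D^{III}$ together with the case split in the definition of $\delta^{r}_{\{p,q,(i,j)_j\}}$ correctly reproduce, crossing by crossing and with the correct inside-count, the geometric sequence $\delta_1\cdots\delta_l$ — including the boundary/degenerate cases where the moving point $P_i$ passes near the first or last point and where the counted number equals $0$. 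Once that bookkeeping is in place, well-definedness of $f^{r}_{n}$ follows from well-definedness in each factor, which is Theorem~\ref{thm_to_gamma}, and homomorphy is immediate since $f^{r}_{n}$ is defined on generators by reading off a concatenation of wall-crossings that is invariant under the defining relations of $PB_n$.
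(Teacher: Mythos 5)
Your proposal follows essentially the same route as the paper's proof: it reduces to the case analysis of Theorem~\ref{thm_to_gamma} and checks, for each of the three codimension-two singularities, that the inside-count labels place the relevant generators either in the same factor (where the corresponding relation of $\Gamma_{n}^{4}$ applies) or in distinct, automatically commuting factors. Your explicit flagging of the $\pm 1$ correction for the moving point and of the algebraic-geometric matching of the words $D^{I},D^{II},D^{III}$ is a welcome extra precaution, but the argument is the same.
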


\begin{proof}
This statement can be proved similarly to toe proof of Theorem~\ref{thm_to_gamma}. Let us list cases of singularities of codimension two explicitly. Notice that the image of $f^{r}_{n}$ from four points on the circle depends on the number of points inside the circle. 

\begin{enumerate}
\item One point moving on the plane is tangent to the circle, which passes through three points, see the center in Fig.~\ref{proof_rel1}. Notice that the number of points inside the circle does not change when the point $P_{j}$ moves. It is easy to see that the image, when one point passes through the circle twice (upper left in Fig.~\ref{proof_rel1}), is
\begin{eqnarray*}
 (1, \cdots,1, d_{(ijkl)}d_{(ijkl)},1, \cdots, 1) \in  \Gamma_{n,1}^{4} \times \cdots \times \Gamma_{n,\alpha}^{4} \times \cdots \times \Gamma_{n,r}^{4},
    \end{eqnarray*}
where the $\alpha$ is the number of points inside the circle, passing through $P_{i},P_{k},P_{l}$.
If the point does not pass through the circle (upper right in Fig.~\ref{proof_rel1}), then the image is
\begin{eqnarray*}
 (1, \cdots, 1, \cdots, 1) \in  \Gamma_{n,1}^{4} \times \cdots \times \Gamma_{n,\alpha}^{4} \times \cdots \times \Gamma_{n,r}^{4}.
    \end{eqnarray*}
 The equality of those two images is obtained by the relation $d_{(ijkl)}^{2} =1$.

\item There are two sets $A=\{P_{i},P_{j},P_{k},P_{l}\}$ and $B=\{P_{s},P_{t},P_{u},P_{v}\}$ of four points, which are on the same circles such that $|A\cap B| \leq 2$, see Fig.~\ref{proof_rel2}. If the number of points inside circles, which passes through points $\{P_{i},P_{j},P_{k},P_{l}\}$ and $\{P_{s},P_{t},P_{u},P_{v}\}$ respectively, are the same mod $r$, then the image from them is 
\begin{eqnarray*}
 (1, \cdots, d_{(ijkl)}d_{(stuv)},1, \cdots, 1) \in  \Gamma_{n,1}^{4} \times \cdots \times \Gamma_{n,\alpha}^{4} \times \cdots \times \Gamma_{n,r}^{4},
    \end{eqnarray*}
or 
\begin{eqnarray*}
 (1, \cdots, d_{(stuv)}d_{(ijkl)},1, \cdots, 1) \in  \Gamma_{n,1}^{4} \times \cdots \times \Gamma_{n,\alpha}^{4} \times \cdots \times \Gamma_{n,r}^{4},
    \end{eqnarray*}
The equality of them follows from the relation $d_{(ijkl)}d_{(stuv)} = d_{(stuv)}d_{(ijkl)}$.

If the number of points inside circles, where the points $\{P_{i},P_{j},P_{k},P_{l}\}$ and $\{P_{s},P_{t},P_{u},P_{v}\}$ are positioned respectively, are different mod $r$, then the image from them is 
\begin{eqnarray*}
 (1, \cdots, d_{(ijkl)}, \cdots, 1 , \cdots 1)  (1, \cdots,1, \cdots, d_{(stuv)}, \cdots 1) \\ \in  \Gamma_{n,1}^{4} \times \cdots \times \Gamma_{n,\alpha}^{4} \times \cdots \times \Gamma_{n,\beta}^{4} \times \cdots \times \Gamma_{n,r}^{4},
    \end{eqnarray*}    
 or
 \begin{eqnarray*}
 (1, \cdots,1, \cdots, d_{(stuv)}, \cdots 1) (1, \cdots, d_{(ijkl)}, \cdots, 1 , \cdots 1) \\ \in  \Gamma_{n,1}^{4} \times \cdots \times \Gamma_{n,\alpha}^{4} \times \cdots \times \Gamma_{n,\beta}^{4} \times \cdots \times \Gamma_{n,r}^{4},
    \end{eqnarray*}    
where $\alpha$ and $\beta$ depend on the numbers of points inside the circles, which pass through $\{P_{i},P_{j},P_{k},P_{l}\}$ and $\{P_{s},P_{t},P_{u},P_{v}\}$, respectively. It is easy to obtain the equality of them.

\item There are five points $\{P_{i},P_{j},P_{k},P_{l},P_{m}\}$ on the same circle. We obtain the sequence of five subsets of $\{P_{i},P_{j},P_{k},P_{l},P_{m}\}$ with four points on the same circle, which corresponds to the flips on the pentagon, see Fig.~\ref{proof_rel3}. Notice that the number of points inside circles does not change. In other words, the image of the sequence has the form of
\begin{eqnarray*}
\indent\indent\indent(1, \cdots, d_{(ijkl)}d_{(ijkm)}d_{(ijlm)}d_{(iklm)}d_{(jklm)},1, \cdots, 1) \in  \Gamma_{n,1}^{4} \times \cdots \times \Gamma_{n,\alpha}^{4} \times \cdots \times \Gamma_{n,r}^{4},
    \end{eqnarray*}
or
\begin{eqnarray*}
\indent\indent\indent (1, \cdots, d_{(jklm)}d_{(iklm)}d_{(ijlm)}d_{(ijkm)}d_{(ijkl)},1, \cdots, 1) \in  \Gamma_{n,1}^{4} \times \cdots \times \Gamma_{n,\alpha}^{4} \times \cdots \times \Gamma_{n,r}^{4}.
    \end{eqnarray*}
The equality of them is obtained from the relation $$d_{(ijkl)}d_{(ijkm)}d_{(ijlm)}d_{(iklm)}d_{(jklm)}=1 = d_{(jklm)}d_{(iklm)}d_{(ijlm)}d_{(ijkm)}d_{(ijkl)}$$ of $\Gamma_{n}^{4}$.

\end{enumerate}

\end{proof}

\section{Braids in $\mathbb{R}^{3}$ and groups $\Gamma_{n}^{4}$}

In~\cite{Manturov_Gnk_config}, the second named author introduced the notion of braids for $\mathbb{R}^{3}$ and $\mathbb{R}P^{3}$. Roughly speaking, {\it a braid for $\mathbb{R}^{3}$(or $\mathbb{R}P^{3}$)} is a path in a configuration space $C'_{n}(\mathbb{R}^{3})$ (or $C'_{n}(\mathbb{R}P^{3})$) with some restrictions. If the initial and end points of the path in $C'_{n}(\mathbb{R}^{3})$ coincide, then the path is called {\it a pure braid for $\mathbb{R}^{3}$($\mathbb{R}P^{3}$)}. In the present section we will construct a group homomorphism from pure braids on $n$ strands in $\mathbb{R}^{3}$ to $\gamma_{n}^{4}$. 


Let us recall the definition of the pure braids for $\mathbb{R}^{k-1}$ for $k>3$.
Let $C'_{n}(\mathbb{R}^{k-1})$ be the subset of $C_{n}(\mathbb{R}^{k-1})$ of all points $x = (x_{1}, \cdots, x_{n})$ such that no $k-1$ points among $\{x_{1}, \cdots, x_{n}\}$ are on the same $(k-3)-$plane.
We say that a point $x \in C'_{n}(\mathbb{R}^{k-1})$ is {\it singular}, if the set of points $x= (x_{1},\cdots, x_{n})$ representing it contains some $k$ points which are not belong to the same $(k-2)-$plane.

We call elements in $\pi_{n}(C'_{n}(\mathbb{R}^{k-1}))$ {\it pure braids on $n$ strands in $\mathbb{R}^{k-1}$}.

The group homomorphism from $\pi_{n}(C'_{n}(\mathbb{R}^{k-1}))$ to $G_{n}^{k}$ is constructed as follows:
Fix two non-singular points $x,x' \in C_{n}'(\mathbb{R}^{k-1})$. 
Let us consider the set of smooth paths $\gamma_{x,x'} : [0,1] \rightarrow C_{n}'(\mathbb{R}^{k-1})$. We call $t \in [0,1]$ {\it a singular moment of $\gamma_{x,x'}$}, if $\gamma_{x,x'}(t)$ is a singular point in $C_{n}'(\mathbb{R}^{k-1})$. We call a smooth path is {\it stable and good} if the following conditions hold:
\begin{enumerate}
\item The set of singular moments $t$ is finite;
\item For each singular moment $t=t_{l}$, there is only one subset of $k$ points belonging to a $(k-2)-$plane among $n$ points $x_{1},\cdots x_{n} \in \mathbb{R}^{k-1}$ such that $\gamma_{x,x'}(t_{l}) = (x_{1},\cdots, x_{n})$.
\item A smooth path is {\it stable}, if the number of singular moments does not change under a small perturbation.
\end{enumerate}

We say that two paths $\gamma,\gamma'$ with the same endpoints $x,x'$ are {\it isotopic}, if there exists a continuous family $\{\gamma_{x,x'}^{s}: [0,1] \rightarrow C'_{n}(\mathbb{R}^{3})\}_{s\in [0,1]}$ of smooth paths with endpoints fixed, such that $\gamma_{x,x'}^{0} = \gamma$ and $\gamma_{x,x'}^{1} = \gamma'$. A smooth path with end points $x = (x_{1},\cdots, x_{n})$ and $x' = (x'_{1},\cdots, x'_{n})$ is called {\it a braid (or a pure braid) on $n$ strands in $\mathbb{R}^{k-1} $}, if 
$ \{ x_{1},\cdots, x_{n} \} = \{ x'_{1},\cdots, x'_{n}\}$ (or $(x_{1},\cdots, x_{n}) = (x'_{1},\cdots, x'_{n})$).

For paths $\gamma_{x,x'}$ and $\gamma_{x',x''}$ the {\it concatenation} operation is well-defined, that is, a smooth path $\gamma_{x,x''}''$ such that $\gamma_{x,x''}''(t) = \gamma_{x,x'}(2t)$ for $t \in [0,\frac{1}{2}]$ and $\gamma_{x,x''}''(t) = \gamma_{x,x'}'(2t-1)$ for $t \in [\frac{1}{2},1]$ and smooth it in the neighborhood of $t= \frac{1}{2}$ is uniquely obtained from $\gamma_{x,x'}$ and $\gamma_{x',x''}$ up to isotopy. It is easy to see that the set of equivalence class of pure braids on $n$ strands in $\mathbb{R}^{k-1}$ up to isotopy admits a group structure, moreover it is isomorphic to $\pi_{1}(C'_{n}(\mathbb{R}^{k-1})$.

\begin{rem}
For a given pure braid $\gamma$ on $n$ strands in $\mathbb{R}^{k-1}$ by a small perturbation we can obtain a good and stable pure braid $\gamma'$ on $n$ strands such that $\gamma$ and $\gamma'$ are isotopic. From now on we just consider good and stable pure braids.
\end{rem}

Let $\gamma$ be a good and stable pure braid on $n$ strands. Let us enumerate all singular moments $0<t_{1}<\cdots< t_{l}<1$ of $\gamma$. For each $t_{s}$ by definition of good pure braids on $n$ strands there are exactly $k$ points on $(k-2)-$plane. Let $m_{s}$ be the set of $k$ indices for points $(k-2)-$plane at the moment $t_{s}$. We associate $a_{m_{s}}$ with the moment $t_{s}$. We define a map $f :  \pi_{1}(C_{n}'(\mathbb{R}^{k-1})) \rightarrow G_{n}^{k}$ by $f(\gamma) = a_{m_{1}}\cdots a_{m_{l}}$.

\begin{prop}\cite{Manturov_Gnk_config}
The map $f : \pi_{1}(C_{n}'(\mathbb{R}^{k-1})) \rightarrow G_{n}^{k}$ described above is a group homomorphism.
\end{prop}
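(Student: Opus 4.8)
The plan is to establish that $f : \pi_{1}(C_{n}'(\mathbb{R}^{k-1})) \rightarrow G_{n}^{k}$ is well-defined and multiplicative by the same ``codimension one generates, codimension two relates'' scheme used in the proof of Theorem~\ref{thm_to_gamma}. First I would fix a generic good and stable representative $\gamma$ of each homotopy class and note that $f$ is obviously compatible with concatenation: the list of singular moments of a concatenation is the concatenation of the two lists (after the smoothing near $t=\tfrac12$, which can be taken to introduce no new singular moment), so $f(\gamma\cdot\gamma') = f(\gamma)f(\gamma')$ on the nose once well-definedness is known. Hence the whole content is invariance under isotopy, i.e.\ under a generic one-parameter family $\{\gamma^{s}\}_{s\in[0,1]}$ of paths with fixed endpoints.

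Next I would invoke a stratification/transversality argument: in the space of smooth paths, the locus where the path fails to be good and stable has codimension one (two of the singular moments collide, or a singular moment passes $0$ or $1$, or more than $k$ points become $(k-2)$-coplanar, or $k$ points reach a $(k-3)$-plane). A generic family $\gamma^{s}$ crosses this locus finitely many times, transversally, so it suffices to check that $f(\gamma^{s})$ is unchanged across each elementary type of crossing. The crossings fall into the standard short list, each producing exactly one of the defining relations of $G_{n}^{k}$: (i) two singular moments involving the same $k$-subset $m$ are created or annihilated in a ``birth/death'' — one point becomes tangent to the $(k-2)$-plane through the other $k-1$ points — contributing $a_{m}a_{m}=1$, which is relation (1); (ii) two singular moments with index sets $m, m'$ satisfying $|m\cap m'|<3$ swap order in $s$ — their supports are geometrically independent enough to move past each other — giving $a_{m}a_{m'}=a_{m'}a_{m}$, relation (2); (iii) $k+1$ points momentarily lie on a common $(k-2)$-plane, so a cluster of $\binom{k+1}{k}=k+1$ consecutive singular moments, indexed by the $k$-subsets of a fixed $(k+1)$-set, gets replaced by the same $k+1$ moments in reversed order, and the equality of the two products is exactly relation (3), $(a_{\{ijkl\}}a_{\{ijkm\}}\cdots)^{2}=1$ in the $k=4$ case and its analogue in general. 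I would also dispose of a boundary crossing (a singular moment slides off the end $t=0$ or $t=1$) by using that the endpoints are non-singular, so the moment can only disappear in a birth/death pair, already covered.

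I would present the argument uniformly in $k$, emphasizing that the combinatorics of ``which relation appears'' is governed entirely by how many shared indices the colliding $(k-2)$-planes have, and that the Perestroika of type (iii) reproduces the higher relation because the $k+1$ points on a $(k-2)$-plane can be re-triangulated in exactly the two cyclic ways that bound a disk — this is where the defining relation of $G_{n}^{k}$ was reverse-engineered from in the first place. The main obstacle, and the only genuinely technical point, is the transversality/genericity claim: one must argue that an arbitrary isotopy can be perturbed (rel endpoints) to one meeting the bad locus only in the listed elementary strata, which requires knowing that each of those strata has the expected codimension in path-space and that no worse degeneration (e.g.\ simultaneous collisions, or a $(k-3)$-plane condition interacting with a $(k-2)$-plane condition) occurs generically. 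Since $C_{n}'(\mathbb{R}^{k-1})$ is an open dense subset of $C_{n}(\mathbb{R}^{k-1})$ cut out by removing the $(k-3)$-coplanarity locus, and the singular locus inside it is a smooth hypersurface away from a codimension-two set, the standard jet-transversality / Thom argument applies; I would cite this rather than carry it out, exactly as the proof of Theorem~\ref{thm_to_gamma} does for the planar case.
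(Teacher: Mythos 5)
The paper does not prove this proposition itself (it is cited from \cite{Manturov_Gnk_config}), but your argument is correct and follows exactly the scheme the paper uses for the analogous results (Theorem~\ref{thm_to_gamma} and the theorem on $g$): concatenation gives multiplicativity on the nose, and invariance reduces to the three codimension-two degenerations matching the three defining relations of $G_{n}^{k}$. One small point worth making explicit for completeness: two simultaneous singular quadruples sharing $k-1$ indices automatically force $k+1$ points onto one $(k-2)$-plane (since $k-1$ generic points determine that plane), so your cases (ii) and (iii) really do exhaust the intersection patterns.
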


We shall consider (good and stable) pure braids on $n$ strands in $\mathbb{R}^{3}$ and construct group homomorphism from pure braids on $n$ strands in $\mathbb{R}^{3}$ to the group $\Gamma_{n}^{4}$. Each element of $\Gamma_{n}^{4}$ corresponds to the moment when four points on $(4-2)$-dimensional plane in $\mathbb{R}^{4-1}$, but in the case of $\Gamma_{n}^{4}$ ``the order'' of four points on $(4-2)$-dimensional plane is very important. This order was ignored when the group homomorphism from pure braids in $\mathbb{R}^{3}$ to $G_{n}^{4}$ is constructed. Now we formulate more precisely how the group homomorphism from pure braids on $n$ strands in $\mathbb{R}^{3}$ to $G_{n}^{4}$ is constructed.

Let $\gamma$ be a good and stable pure braid on $n$ strands in $\mathbb{R}^{3}$ with base point $x = (x_{1},\cdots, x_{n})$. We call $t \in [0,1]$ {\it a special singular moment of $\gamma$} if the followings hold:

\begin{enumerate}
\item At the moment $t$ four points $x_{p},x_{q},x_{r},x_{s}$ are on the same plane $\Pi_{t}$.
\item The four points $x_{p},x_{q},x_{r},x_{s}$ make a convex quadrilateral on $\Pi_{t}$.
\item All of $\{x_{1},\cdots, x_{n}\} \backslash \{ x_{p},x_{q},x_{r},x_{s} \}$ placed in the only one of connected components of $\mathbb{R}^{3} \backslash \Pi_{t}$.
\end{enumerate}

A normal vector $v_{t}$ of $\Pi_{t}$ pointing to the connected component, in which the points $\{x_{1},\cdots, x_{n}\} \backslash \{ x_{p},x_{q},x_{r},x_{s} \}$ are placed, is called {\it the pointing vector} at the special singular moment $t$. 

\begin{figure}[h!]
 \centering
 \includegraphics[width = 5cm]{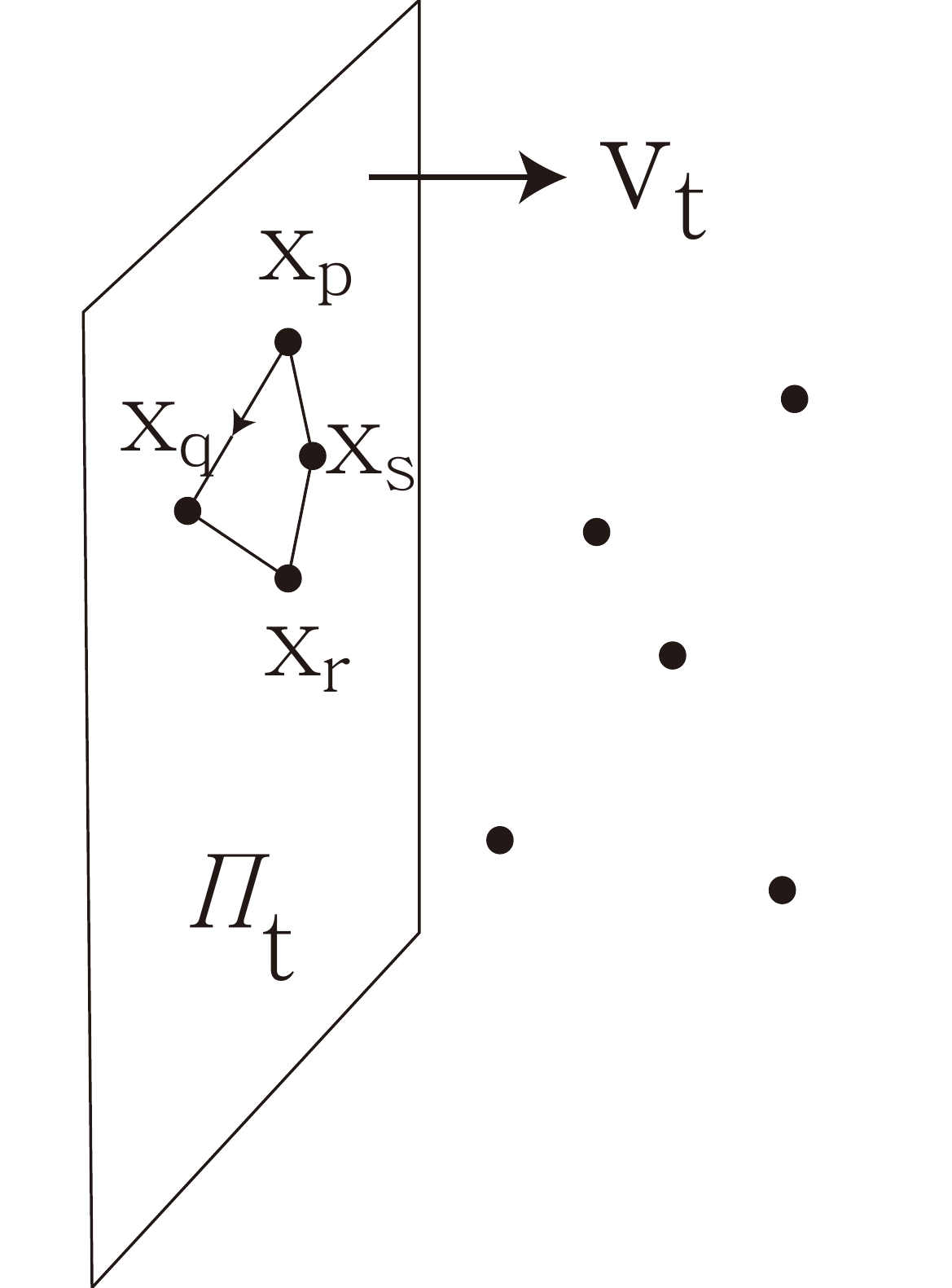}
 \caption{A special singular moment, corresponding to $d_{(pqrs)}$}\label{singular_moment}
\end{figure}

\begin{rem}

\end{rem}\begin{enumerate}
\item The plane $\Pi_{t}$ admits a unique orientation with respect to $v_{t}$. 
\item Naturally, the quadrilateral admits the orientation with respect to $v_{t}$, see Fig.~\ref{singular_moment}.
\end{enumerate}

Let us enumerate all special singular moments $0<t_{1}<\cdots< t_{l}<1$ of $\gamma$. For each $t_{s}$ by definition of good pure braids on $n$ strands there are exactly four points $\{ x_{p},x_{q},x_{r},x_{s}\}$ on plane $\Pi_{t_{s}}$. As indicated in the previous remark the convex quadrilateral on plane $\Pi_{t_{s}}$ with four vertices $\{ x_{p},x_{q},x_{r},x_{s}\}$ admits the orientation with respect to $v_{t_{s}}$. If four points $x_{p},x_{q},x_{r},x_{s}$ are positioned as indicated order in accordance the orientation with respect to $v_{t_{s}}$, then we associate the moment $t_{s}$ to $d_{t_{s}}=d_{(pqrs)}$. Let us define a map $g: \pi_{1}(C_{n}'(\mathbb{R}^{3})) \rightarrow \Gamma_{n}^{4}$ by
$g(\gamma) = d_{t_{1}}\cdots d_{t_{l}}$.

\begin{thm}
The map $g: \pi_{1}(C_{n}'(\mathbb{R}^{3})) \rightarrow \Gamma_{n}^{4}$ is well-defined.
\end{thm}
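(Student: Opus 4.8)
The plan is to mimic the proof of Theorem~\ref{thm_to_gamma}: to show that $g$ is well-defined and a homomorphism, it suffices to check that the value $g(\gamma)$ is invariant under isotopies of good and stable pure braids in $\mathbb{R}^{3}$, and for this we only need to inspect the singularities of codimension one and two that can occur in a generic one-parameter family of paths. Codimension-zero strata contribute nothing, codimension-one strata are exactly the special singular moments (which define the letters $d_{(pqrs)}$), and the codimension-two strata must be shown to correspond precisely to the defining relations of $\Gamma_{n}^{4}$ in Definition~\ref{dfn_Gamma}. So the body of the proof is an enumeration of the codimension-two phenomena.

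First I would classify what can degenerate. A special singular moment requires three conditions: four points $x_p,x_q,x_r,x_s$ coplanar, these four in convex position, and all remaining points strictly on one side of the plane $\Pi_t$. Raising the codimension by one gives the following list. (a) A \emph{fifth} point $x_m$ becomes coplanar with $x_p,x_q,x_r,x_s$: five points on a plane $\Pi$, with the other $n-5$ points on one side. Projecting the five points to $\Pi$ and tracking which convex quadrilaterals appear as the plane is perturbed reproduces exactly the cycle of five flips on a pentagon, hence the pentagon relation $d_{(ijkl)}d_{(ijkm)}d_{(ijlm)}d_{(iklm)}d_{(jklm)}=1$; this is the analogue of case (3) in the proof of Theorem~\ref{thm_to_gamma}, and it is the step I expect to be the main obstacle, since one must verify that the orientations induced by the pointing vector $v_t$ make the five letters appear with the correct cyclic order and the correct four-tuple symmetrization (relation (4) of Definition~\ref{dfn_Gamma}), and that the ``all other points on one side'' condition is automatically preserved throughout the pentagon's worth of flips. (b) The quadrilateral $x_p x_q x_r x_s$ passes through a non-convex configuration, i.e.\ one vertex momentarily lies on the diagonal of the triangle formed by the other three — at that instant only a \emph{triangle} of points is spanning, the coplanarity is ``tangential'', and a small perturbation either produces the convex quadrilateral twice (once on each side of the degeneration) or not at all; this gives $d_{(pqrs)}^2=1$, the analogue of case (1). (c) One of the remaining $n-4$ points $x_m$ crosses the plane $\Pi_t$ at the same moment the four points become coplanar: condition (3) flips from ``all on one side'' through ``all but one on one side'' — but I would argue that at the exact crossing instant the coplanarity is transverse while $x_m\in\Pi_t$, and the pointing vector $v_t$ is unchanged by which side $x_m$ sits on (since $v_t$ points toward the bulk of the $n-5$ other points, or is determined by orientation conventions that do not see $x_m$), so the same letter $d_{(pqrs)}$ is read on both sides and the contribution cancels or is constant; this case has no analogue in the planar setting and needs a careful, honest argument. (d) Two independent special singular moments, associated to index sets $\{p,q,r,s\}$ and $\{s',t',u',v'\}$ with $|\{p,q,r,s\}\cap\{s',t',u',v'\}|\le 2$, collide in time and swap order; the two corresponding planes are independent, so the letters $d_{(pqrs)}$ and $d_{(s't'u'v')}$ commute, which is relation (2). (One should also note that $|\{p,q,r,s\}\cap\{s',t',u',v'\}|=3$ cannot occur generically as a codimension-two event with two distinct planes, so no further relation is forced.)

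After the case analysis, I would assemble the argument: any two isotopic good stable pure braids are joined by a generic path in the space of paths, along which the word $g(\gamma)$ changes only by the moves (a)--(d), each of which is a consequence of the relations of $\Gamma_{n}^{4}$; hence $g$ descends to a well-defined map on $\pi_1(C_n'(\mathbb{R}^3))$. Compatibility with concatenation is immediate from the definition $g(\gamma)=d_{t_1}\cdots d_{t_l}$ (the singular moments of a concatenation are the union of those of the factors, in order), so $g$ is a homomorphism. The only genuinely delicate points, as noted, are verifying the orientation bookkeeping in case (a) so that it lands on the pentagon relation rather than some twisted variant, and ruling out or correctly interpreting the mixed degeneration in case (c); everything else parallels Theorem~\ref{thm_to_gamma} closely.
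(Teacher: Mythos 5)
Your proposal follows essentially the same route as the paper: the paper's proof is exactly the enumeration of codimension-two singularities, with your cases (b), (d), (a) matching its three listed cases (tangential coplanarity giving $d_{(pqrs)}^{2}=1$, two quadruples with small intersection giving commutativity, and five coplanar points giving the pentagon relation). Your additional case (c) — a fifth point crossing the plane $\Pi_{t}$ at the moment of coplanarity, which can create or destroy a \emph{special} singular moment because of condition (3) in its definition — is a genuine codimension-two stratum that the paper's proof does not address at all, so your flagging of it as requiring an honest argument is a point in your favor rather than a deviation.
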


\begin{proof}
We consider moments of isotopy between two paths, when the path at some moment in the isotopy between two paths is not good or not stable. Let us list such cases explicitly.

\begin{enumerate}
\item There are four points on a , which disappears after a small perturbation, see Fig.~\ref{rel1_gamma}. This corresponds to the relation $d_{(pqrs)}^{2} = 1$. 

\begin{figure}[h!]
 \centering
 \includegraphics[width = 6cm]{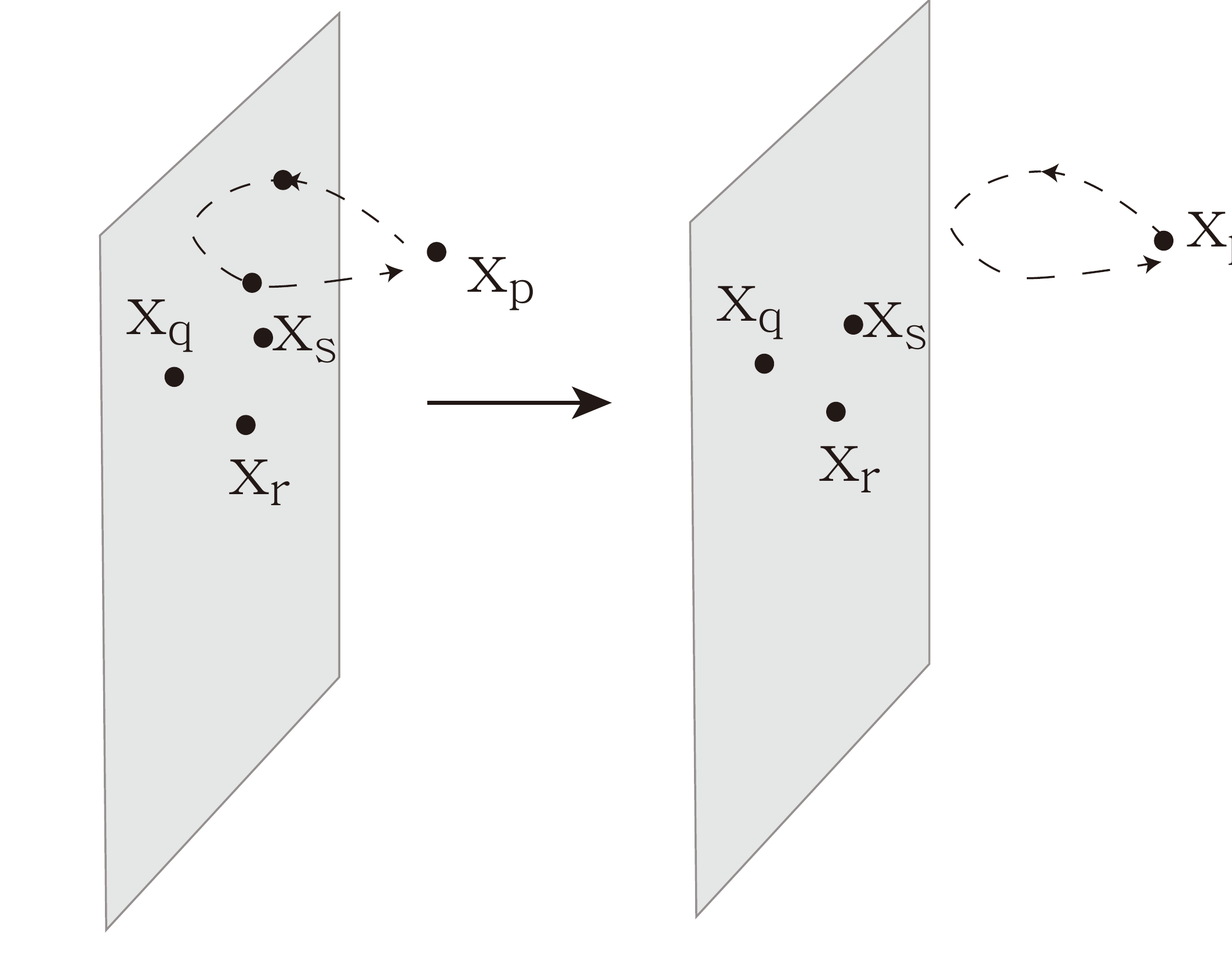}
 \caption{Case 1: Four points on a plane, which disappears after a small perturbation}\label{rel1_gamma}
\end{figure}

\item At a moment there are two sets of four points $m$ and $m'$ with $|m \cap m'|<3$, which are placed on planes at the same moment, see Fig.~\ref{rel2_gamma}. This corresponds to the relation $d_{(ijkl)}d_{(stuv)} =d_{(stuv)} d_{(ijkl)}$.

\begin{figure}[h!]
 \centering
 \includegraphics[width = 5cm]{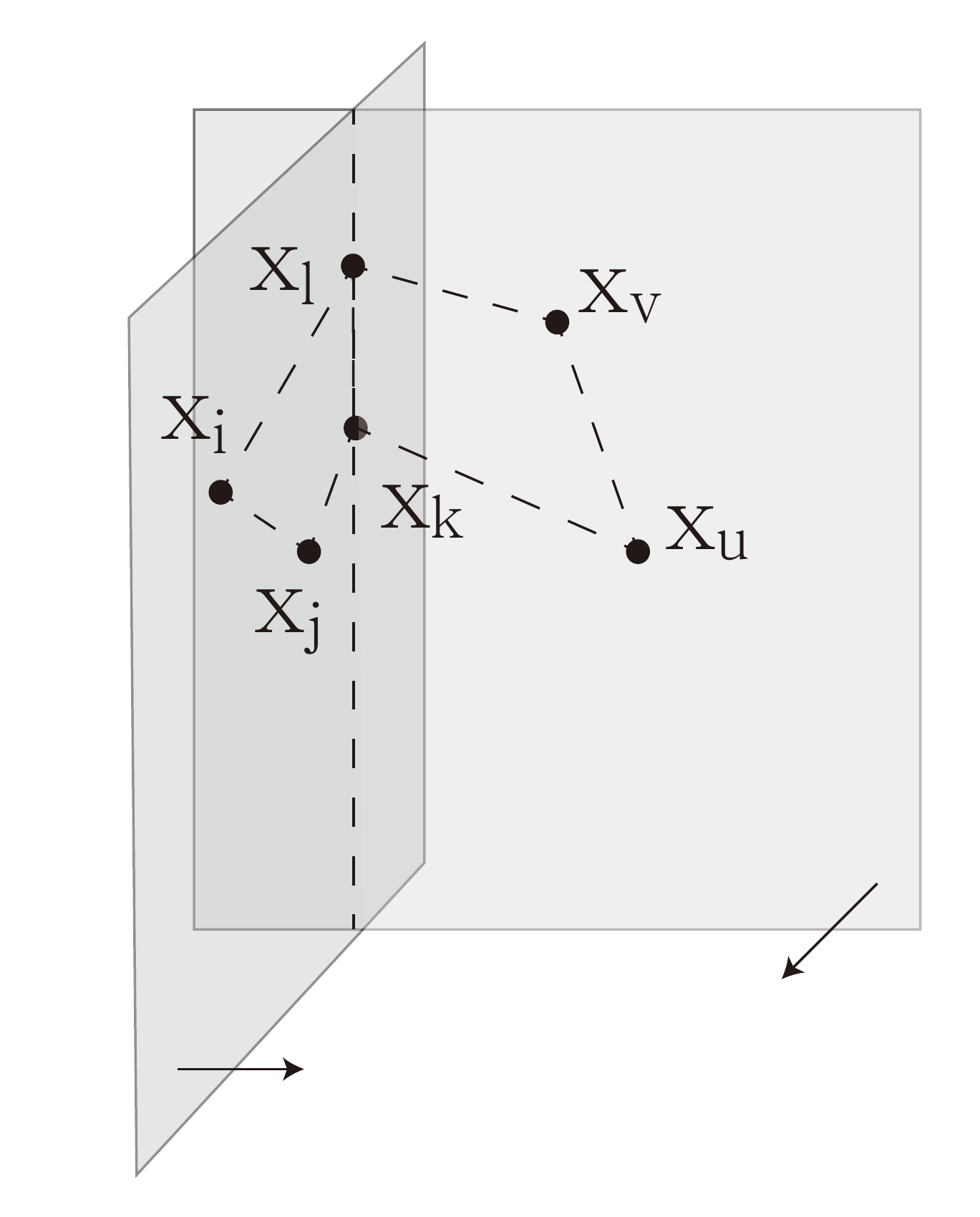}
 \caption{Case 2: Two sets of four points $\{x_{i},x_{j},x_{k},x_{l}\}$ and $\{x_{l},x_{k},x_{u},x_{v}\}$ on planes at the same moment}\label{rel2_gamma}
\end{figure}

\item At a moment five points on a plane. This is similar to the case of ``five points on the circle'' in the proof of Theorem~\ref{thm_to_gamma}. This corresponds to the relation $d_{(ijkl)}d_{(ijkm)}d_{(ijlm)}d_{(iklm)}d_{(jklm)} = 1$ of $\Gamma_{n}^{4}$.

\end{enumerate}

\end{proof}

Let $\{P_{1}(t), \cdots, P_{n}(t)\}_{t \in [0,1]}$ be $n$ moving points in $\mathbb{R}^{3}$, corresponding to the path in $\pi_{1}(C_{n}'(\mathbb{R}^{3}))$. We may assume that the points $\{P_{1}(t), \cdots, P_{n}(t)\}_{t \in [0,1]}$ move inside a the sphere with sufficiently large diameter. Let us fix four points $\{A,B,C,D\}$ on the sphere. A triangulation of 3-ball with vertices $\{P_{1}(t), \cdots, P_{n}(t)\} \cup \{A,B,C,D\}$ can be obtained for each $t \in [0,1]$, see Fig.~\ref{triangulation_R3}. 

\begin{figure}[h!]
 \centering
 \includegraphics[width = 6cm]{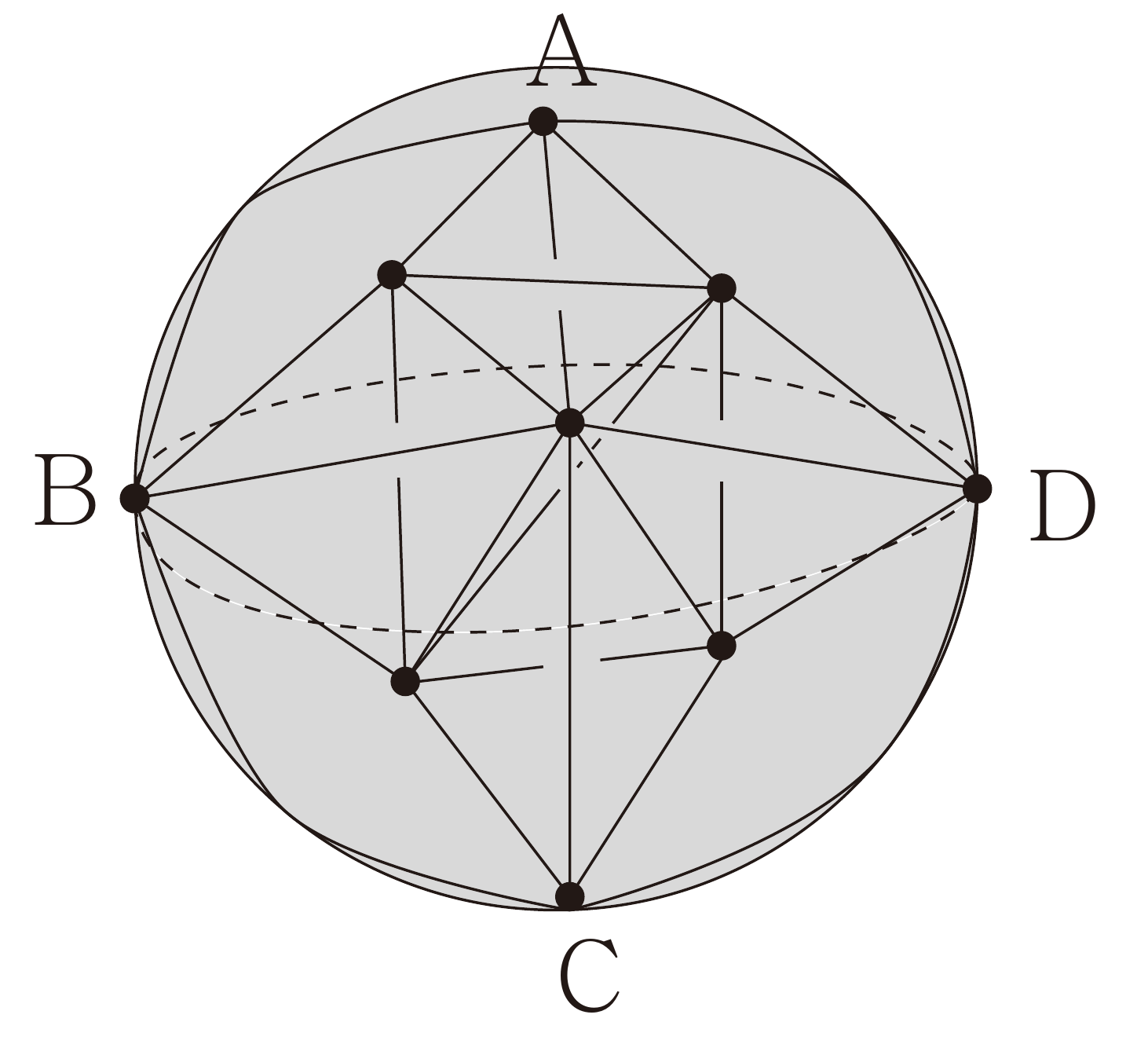}
 \caption{Triangulation of 3-disk with $\{P_{1}(t), \cdots, P_{n}(t)\}$ inside the sphere and four points $\{A,B,C,D\}$ on the sphere}\label{triangulation_R3}
\end{figure}

On the other hand, as described in Fig.~\ref{pachner_move}, the moving of a vertex of the triangulation can be described by applying {\it the Pachner moves} to the triangulation of a 3 dimensional space.
\begin{figure}[h!]
 \centering
 \includegraphics[width = 10cm]{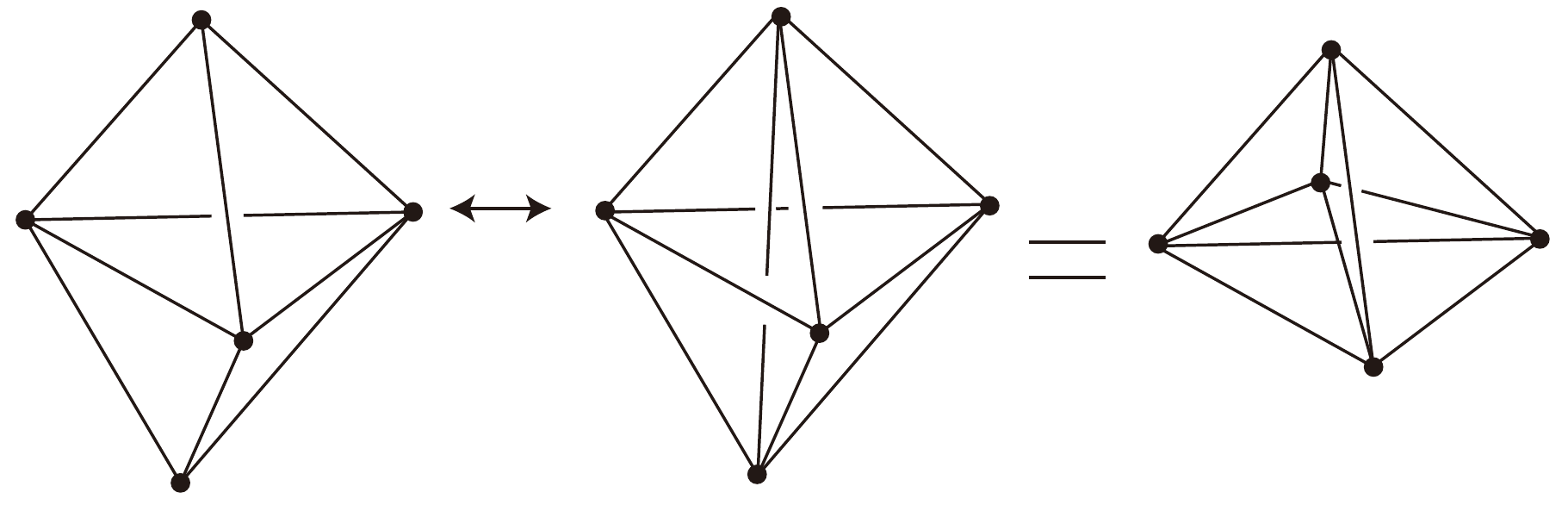}
 \caption{Pachner move and moving of a vertex of the triangulation of 3-dimensional space}\label{pachner_move}
\end{figure}
In other words, a path $\{P_{0}(t),\cdots, P_{n}(t)\}$ in $\pi_{1}(C_{n}'(\mathbb{R}^{3}))$ can be described by a finite sequence of ``Pachner moves'' applied to the triangulations of the sphere, see Fig.~\ref{point_moving_triangulation}.
\begin{figure}[h!]
 \centering
 \includegraphics[width = 10cm]{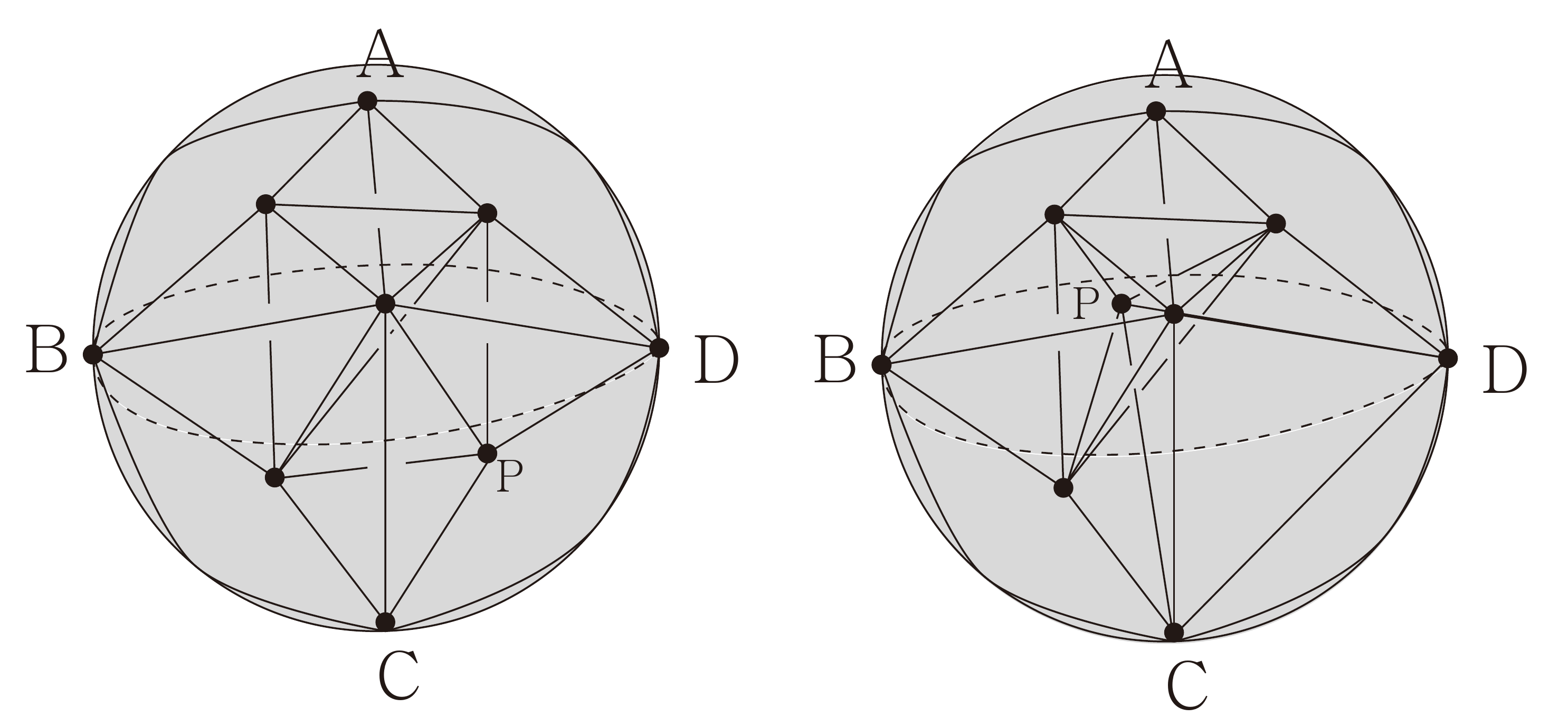}
 \caption{Applying a Pachner move to the triangulation of a 3 dimensional space and the moving of a vertex}\label{point_moving_triangulation}
\end{figure}

It can be expected that by using the triangulation of a sphere with $n+4$ points and the sequence of the Pachner moves, we obtain an invariant for (pure) braids.

\end{document}